\documentclass[11pt]{amsart}
\usepackage{upkg}
\title{Newton polytopes of fireworks Grothendieck polynomials}

\author{Jack Chen-An Chou}
\address{Jack Chen-An Chou, Department of Mathematics, University of Florida, Gainesville, FL 32611.}
\email{c.chou@ufl.edu}

\author{Linus Setiabrata}
\address{Linus Setiabrata, Department of Mathematics, Massachusetts Institute of Technology, Cambridge, MA 02139.} \email{setia@mit.edu}

\begin{document}

\maketitle

\begin{abstract}
We show that the support of the Grothendieck polynomial $\mathfrak G_w$ of any fireworks permutation is as large as possible: a monomial appears in $\mathfrak G_w$ if and only if it divides $\mathbf x^{\wt(\overline{D(w)})}$ and is divisible by some monomial appearing in the Schubert polynomial $\mathfrak S_w$. Our formula implies that the homogenization of $\mathfrak G_w$ has M-convex support. We also show that for any fireworks permutation $w\in S_n$, there exists a layered permutation $\pi(w)\in S_n$ so that $\supp(\mathfrak G_{\pi(w)})\supseteq \supp(\mathfrak G_w)$.
\end{abstract}

\section{Introduction}

Schubert polynomials $\mathfrak S_w$ and Grothendieck polynomials $\mathfrak G_w$ are families of multivariate polynomials, introduced by Lascoux and Sch\"utzenberger \cite{ls82a,ls82b}, which represent Schubert varieties in the cohomology and $K$-theory of the flag variety respectively. These polynomials have been of great interest due to their rich connections to geometry, algebra, and representation theory. At the same time, many central problems are amenable to combinatorial methods \cite{bb93,bjs93,fk94,fk96,km05}. Compared to Schubert polynomials, Grothendieck polynomials encode finer information, but their combinatorial structure is less understood.

It has been fruitful to study Schubert polynomials from a polytopal point of view. The Newton polytope of $\mathfrak S_w$ is the \emph{Schubitope} $\mathcal S_{D(w)}$ associated to the Rothe diagram $D(w)$ \cite{mty19}. The set of Schubitopes has good closure properties, and any Schubitope can be computed ``column by column'' via a Minkowski sum decomposition $\mathcal S_D = \mathcal S_{D_1} + \dots + \mathcal S_{D_m}$.  Using this decomposition, Fink--M\'esz\'aros--St.\ Dizier \cite{fms18} showed that Schubert polynomials have saturated Newton polytope (SNP) and Adve--Robichaux--Yong \cite{ary21} gave a simple combinatorial criterion to determine if a given monomial appears in $\mathfrak S_w$. Other applications of the Schubitope perspective include a sufficient condition for vanishing of Littlewood--Richardson coefficients \cite{sy22} and lower bounds on the principal specializations $\mathfrak S_w(1,...,1)$ \cite{mst21,gl24}.

The corresponding story for Grothendieck polynomials is expected to be similarly well-behaved. For example, the \emph{support} of $\mathfrak G_w$ is conjecturally \cite[Conj 1.1, Conj 1.3]{mss25} governed by its top- and bottom-degree homogeneous components via the formula
\begin{equation}
\label{eqn:master-support}
\supp(\mathfrak G_w) = \bigcup_{\substack{\alpha\in\supp(\mathfrak S_w)\\\beta\in\supp(\mathfrak G_w^{\mathrm{top}})}}[\alpha,\beta],
\end{equation}
where $[\alpha,\beta]$ denotes the componentwise-comparison interval $\{\gamma\in\ZZ^n\colon \alpha_i \leq \gamma_i \leq \beta_i\textup{ for all } i\}$.

Every monomial appearing in any Grothendieck polynomial $\mathfrak G_w$ divides $\mathbf x^{\wt(\overline{D(w)})}$, where $\overline{D(w)}$ is the \emph{upwards-closure} of $D(w)$. If $w$ is a \emph{fireworks} permutation, then $\mathfrak G_w^{\mathrm{top}}$ is a scalar multiple of $\mathbf x^{\wt(\overline{D(w)})}$. The main result of this paper is that~\eqref{eqn:master-support} holds for fireworks Grothendieck polynomials.
\begin{thm}
\label{thm:main}
Let $w$ be a fireworks permutation. If $\mathbf x^\alpha$ appears in $\mathfrak G_w$ and $x_i\cdot \mathbf x^\alpha$ divides $\mathbf x^{\wt(\overline{D(w)})}$, then $x_i\cdot \mathbf x^\alpha$ appears in $\mathfrak G_w$. In particular,
\[
\supp(\mathfrak G_w) = \bigcup_{\alpha\in\supp(\mathfrak S_w)}[\alpha,\wt(\overline{D(w)})].
\]
\end{thm}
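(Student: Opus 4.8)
The plan is to derive the displayed equality from the first assertion and to prove the first assertion combinatorially via pipe dreams. Write $\mu=\wt(\overline{D(w)})$. For the inclusion $\supseteq$: since $\mathfrak S_w$ is the lowest-degree homogeneous component of $\mathfrak G_w$, we have $\supp(\mathfrak S_w)\subseteq\supp(\mathfrak G_w)$; and given $\alpha\in\supp(\mathfrak S_w)$ and $\gamma\in[\alpha,\mu]$, choose a chain $\alpha=\beta^{(0)},\beta^{(1)},\dots,\beta^{(t)}=\gamma$ in which $\beta^{(k+1)}$ is $\beta^{(k)}$ with $1$ added in a single coordinate $i_k$; because $\beta^{(k)}\le\gamma\le\mu$ we get $\beta^{(k)}_{i_k}<\mu_{i_k}$, so applying the first assertion along the chain gives $\gamma\in\supp(\mathfrak G_w)$. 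For the inclusion $\subseteq$: every monomial of $\mathfrak G_w$ divides $\mathbf x^{\mu}$, so it suffices to check that each $\gamma\in\supp(\mathfrak G_w)$ dominates coordinatewise some element of $\supp(\mathfrak S_w)$; this is immediate from the pipe dream expansion of $\mathfrak G_w$ and the standard fact that any word with Demazure product $w$ contains a reduced word for $w$ as a subword, so that any pipe dream for $w$ contains a reduced pipe dream for $w$, whose weight lies in $\supp(\mathfrak S_w)$ and is $\le\gamma$.

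For the first assertion itself, the crucial observation is that the pipe dream expansion $\mathfrak G_w=\sum_D(-1)^{|D|-\ell(w)}\mathbf x^{\wt(D)}$ — the sum over subsets $D$ of the staircase whose associated Hecke word has Demazure product $w$ — has \emph{no cancellation after collecting monomials}, because the coordinate sum of $\wt(D)$ equals the crossing count $|D|$, so the sign $(-1)^{|D|-\ell(w)}$ depends only on $\wt(D)$. Therefore $\supp(\mathfrak G_w)=\{\wt(D):D\text{ a pipe dream for }w\}$, and the first assertion is equivalent to: \emph{if $D$ is a pipe dream for $w$ and $\wt(D)_i<\mu_i$, then some pipe dream for $w$ has weight equal to $\wt(D)$ with $1$ added in coordinate $i$.} I would produce such a pipe dream by adjoining one crossing inside row $i$ of $D$. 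The mechanism is the identity $u\ast s\ast v=u\ast v$ for $s$ a right descent of $u$: adjoining a crossing at a staircase cell $c=(i,j)\notin D$ preserves the Demazure product, hence gives a pipe dream for $w$, provided the simple reflection carried by $c$ is a right descent of the Demazure product of the initial segment of the Hecke word of $D$ preceding $c$. So the statement reduces to showing that when fewer than $\mu_i$ cells of row $i$ are crossed, some uncrossed cell of row $i$ is admissible for this move, perhaps after a rearrangement of the crossings within row $i$.

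Establishing that last existence statement is the main obstacle, and this is where the fireworks hypothesis is used. As recalled in the excerpt, when $w$ is fireworks $\mathfrak G_w^{\mathrm{top}}$ is a scalar multiple of the single monomial $\mathbf x^{\mu}$; since every monomial of $\mathfrak G_w$ divides $\mathbf x^{\mu}$ and $\mathbf x^{\mu}$ itself occurs, this identifies $\mu_i$ with the maximum number of crossings that any pipe dream for $w$ carries in row $i$. The plan is then to track the running Demazure product as one reads along row $i$ of $D$ and to show — using the fireworks structure of $w$ to locate the admissible cells of row $i$ — that $\wt(D)_i<\mu_i$ always leaves an admissible uncrossed cell in row $i$, possibly after reshuffling the crossings already present there. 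Adjoining a crossing at such a cell produces the desired pipe dream. I expect this local count, together with the check that the reshuffling keeps $D$ a pipe dream for $w$, to constitute the bulk of the proof; everything else is formal.
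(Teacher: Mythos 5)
The formal scaffolding of your proposal is sound and matches the paper's: the no-cancellation observation (the sign $(-1)^{|D|-\ell(w)}$ depends only on $\wt(D)$ since $|D|=\sum_b\wt(D)_b$), the derivation of the displayed support formula from the one-step claim, and the lower bound via deleting fake crossings to obtain a reduced pipe dream (this is the paper's Lemma~\ref{lem:groth-lower-bounded}) are all correct. But the entire combinatorial content of the theorem is the existence statement you isolate and then explicitly defer (``Establishing that last existence statement is the main obstacle\dots I expect this local count\dots to constitute the bulk of the proof''). That is not a proof sketch of the hard step; it is a restatement of it, so the proposal has a genuine gap.

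Moreover, the specific reduction you set up is stronger than what the paper proves in one step, and your proposed mechanism is too local to work. You aim to produce a pipe dream of weight exactly $\wt(D)+\mathbf e_i$ by adjoining (or reshuffling and adjoining) crossings \emph{only in row $i$}. Toggling a cell from bump to cross swaps the downstream trajectories of its two pipes, so it perturbs the word far beyond the prefix at that cell: a fake crossing between one of these pipes and a third pipe in a \emph{lower} row can become a real crossing, changing the Demazure product. The paper's Theorem~\ref{thm:main2-steps} accordingly constructs $Q$ by trading a carefully chosen \emph{real} crossing in a row below $a$ for a new crossing in row $a$ and deleting the intervening fake crossings (Lemmas~\ref{lem:exists-bump}, \ref{lem:three-in-a-tile}, \ref{lem:case-2-propagates} and a two-case analysis using the fireworks hypothesis); this only yields $\wt(Q)_b\le\wt(P)_b$ for $b>a$, and the exact weight $\wt(P)+\mathbf e_a$ is recovered by iterating the weaker step over the lower rows. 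Your ``right descent of the prefix'' criterion is only a sufficient condition for preserving the Demazure product and is not the condition that can be arranged here; if you want to complete the argument along your lines, you should weaken your target lemma to allow weight losses in rows below $i$ and then iterate, which is precisely the paper's route.
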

Theorem~\ref{thm:main} implies that the Newton polytope of a fireworks Grothendieck polynomial $\mathfrak G_w$ can be computed ``column by column'', like with Schubitopes, via the formula
\[
\supp(\mathfrak G_w)= \left(\sum_{j=1}^nP_{\mathrm{sp}}(\SM(D_j))\right)\cap\ZZ^n,
\]
where $D_j$ are the columns of the Rothe diagram $D(w)$ and $P_{\mathrm{sp}}(\SM(D_j))$ is the Schubert spanning set polytope (see Definition~\ref{defn:schubert-spanning-set-polytope}).

The description afforded by Theorem~\ref{thm:main} implies that homogenous Grothendieck polynomials of fireworks permutations have M-convex support.
\begin{cor}
\label{cor:m-convex}
When $w$ is fireworks, the homogenized Grothendieck polynomial $\widetilde{\mathfrak G_w}$ has M-convex support, i.e., $\widetilde{\mathfrak G}_w$ has SNP and its Newton polytope is a generalized permutahedron.
\end{cor}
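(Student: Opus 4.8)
The plan is to deduce the corollary from Theorem~\ref{thm:main} together with the fact that Schubert polynomials have M-convex support \cite{fms18}: writing $P:=\operatorname{Newton}(\mathfrak S_w)$, this says that $P$ is an integral generalized permutahedron with $P\cap\ZZ^n=\supp(\mathfrak S_w)$. Put $\mu:=\wt(\overline{D(w)})$ and $d:=\sum_i\mu_i$, which is the degree of $\mathfrak G_w$ since $w$ is fireworks. Recall that the assertion to be proved means precisely that $\widetilde{\mathfrak G_w}$ has SNP and that $\operatorname{Newton}(\widetilde{\mathfrak G_w})$ is a generalized permutahedron.

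The first step is to restate Theorem~\ref{thm:main} polytopally. Every $\alpha\in\supp(\mathfrak S_w)$ satisfies $\alpha\le\mu$, so the interval description yields $\operatorname{Newton}(\mathfrak G_w)=\bigcup_{\alpha\in P}[\alpha,\mu]=(\mu-\RR^n_{\ge0})\cap(P+\RR^n_{\ge0})$ by an elementary manipulation using convexity of $P$. Moreover $\mathfrak G_w$ has SNP: a lattice point $\gamma$ of $\operatorname{Newton}(\mathfrak G_w)$ satisfies $\gamma\le\mu$ and lies in $P+\RR^n_{\ge0}$, so (since the lattice points of $P+\RR^n_{\ge0}$ are $\supp(\mathfrak S_w)+\ZZ^n_{\ge0}$, a standard consequence of M-convexity of $\supp(\mathfrak S_w)$) we have $\gamma\ge\alpha$ for some $\alpha\in\supp(\mathfrak S_w)$, whence $\gamma\in[\alpha,\mu]\subseteq\supp(\mathfrak G_w)$. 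The homogenization map $h\colon\RR^n\to\RR^{n+1}$, $h(x)=(d-\sum_i x_i,x_1,\dots,x_n)$, is an affine bijection onto the hyperplane $H_d=\{y:\sum_{i=0}^n y_i=d\}$ restricting to a bijection $\ZZ^n\to\ZZ^{n+1}\cap H_d$; hence $\operatorname{Newton}(\widetilde{\mathfrak G_w})=h(\operatorname{Newton}(\mathfrak G_w))$ and $\widetilde{\mathfrak G_w}$ inherits SNP from $\mathfrak G_w$.

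The heart of the argument is to show that $h(\operatorname{Newton}(\mathfrak G_w))$ is a generalized permutahedron, by identifying it as a linear image of the independence polytope of a polymatroid. Write $P+\RR^n_{\ge0}=\{x:x(T)\ge f(T)\text{ for all }T\subseteq[n]\}$ with $x(T):=\sum_{i\in T}x_i$ and $f(T):=\min_{x\in P}x(T)$; here $f$ is supermodular. The substitution $\varepsilon:=\mu-\gamma$ converts $\gamma\in\operatorname{Newton}(\mathfrak G_w)$ into $\varepsilon\ge0$ and $\varepsilon(T)\le g(T)$ for all $T$, where $g(T):=\sum_{i\in T}\mu_i-f(T)$ is integer-valued, satisfies $g(\emptyset)=0$, is nonnegative because $\mu$ dominates $P$, and is submodular as the difference of a modular and a supermodular function. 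Thus $\{\mu-\gamma:\gamma\in\operatorname{Newton}(\mathfrak G_w)\}$ is the independence polytope of a polymatroid, whose edges are parallel to the vectors $e_i$ and $e_i-e_j$. Since $h(\operatorname{Newton}(\mathfrak G_w))=h(\mu)+L(\{\mu-\gamma:\gamma\in\operatorname{Newton}(\mathfrak G_w)\})$ for the injective linear map $L\colon e_i\mapsto e_0-e_i$, and since $L(e_i)=e_0-e_i$ and $L(e_i-e_j)=e_j-e_i$, every edge of $h(\operatorname{Newton}(\mathfrak G_w))$ has the form $e_a-e_b$ with $a,b\in\{0,\dots,n\}$; hence it is a generalized permutahedron, and it is integral since $g$ is integer-valued and $h(\mu)$ is a lattice point. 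Combined with the SNP established above, $\supp(\widetilde{\mathfrak G_w})$ is the set of lattice points of an integral generalized permutahedron, i.e.\ is M-convex.

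The main obstacle I anticipate is this key step: verifying cleanly that the reflection $\gamma\mapsto\mu-\gamma$ turns $\operatorname{Newton}(\mathfrak G_w)$ into the independence polytope of a polymatroid and controlling edge directions under $L$. Conceptually, homogenization converts the operation ``intersect the up-closure of $\operatorname{Newton}(\mathfrak S_w)$ with $\{x\le\mu\}$'' --- which in general is not a generalized permutahedron --- into one, and this becomes transparent only after passing to the complementary coordinate. A secondary point needing care is the standard fact used for SNP, that the lattice points of $P+\RR^n_{\ge0}$ are $(P\cap\ZZ^n)+\ZZ^n_{\ge0}$; this follows from the basic theory of M-convex sets, or can be bypassed by arguing directly from Theorem~\ref{thm:main}.
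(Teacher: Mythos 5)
Your proof is correct, but it takes a genuinely different route from the paper's. The paper deduces the corollary from a column-by-column Minkowski decomposition: it combines Theorem~\ref{thm:main} with Proposition~\ref{prop:psp-integer-points} to identify $\supp(\mathfrak G_w)$ with the integer points of $\sum_j P_{\mathrm{sp}}(\SM(D_j))$, homogenizes each Schubert spanning set polytope (which is M-convex by Fujishige's theorem), and concludes via closure of generalized permutahedra under Minkowski sums together with the integer-decomposition property of such sums. You instead work globally with $\Newton(\mathfrak G_w)$ itself: starting from the interval description in Theorem~\ref{thm:main} and the M-convexity of $\supp(\mathfrak S_w)$ from \cite{fms18}, you identify the reflection $\wt(\overline{D(w)})-\Newton(\mathfrak G_w)$ as the independence polytope of an integer polymatroid and check edge directions after homogenizing. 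Both arguments ultimately rest on the same principle --- that homogenizing an M${}^\natural$-convex set produces an M-convex set --- applied either summand-by-summand (the paper) or to the whole polytope at once (you). Your route avoids Proposition~\ref{prop:psp-integer-points} entirely but leans on several standard polymatroid facts that deserve citation or proof: that $\Newton(\mathfrak S_w)+\RR^n_{\geq 0}$ is cut out by the supermodular lower rank function and that its lattice points are $\supp(\mathfrak S_w)+\ZZ^n_{\geq 0}$ (this is where M-convexity of $\supp(\mathfrak S_w)$ is genuinely needed, as you note); that independence polytopes of integer submodular functions are integral with edges parallel to $\mathbf e_i$ and $\mathbf e_i-\mathbf e_j$; and the edge-direction characterization of generalized permutahedra. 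Also, the first identity $\conv\bigl(\bigcup_{\alpha\in\supp(\mathfrak S_w)}[\alpha,\mu]\bigr)=(\mu-\RR^n_{\geq 0})\cap(\Newton(\mathfrak S_w)+\RR^n_{\geq 0})$ requires a short (fractional-knapsack-type) argument for the reverse inclusion, not just convexity. The trade-off: the paper's proof additionally produces the explicit Minkowski sum formula for $\Newton(\mathfrak G_w)$, which it reuses in Corollary~\ref{cor:maximal-newton}, whereas your proof is more self-contained relative to the material in Section 4.
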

Homogenized Grothendieck polynomials $\widetilde{\mathfrak G}_w$ are known to also have M-convex support when $w$ is vexillary (\cite{hafner22,hmss24}) and when $\mathfrak S_w$ has coefficients all zero or one (\cite{ccmm22}). The general case remains open (\cite{hmms22}). 

Grothendieck polynomials are generating functions for certain combinatorial objects called pipe dreams. The key ingredient in our proof of Theorem~\ref{thm:main} is an explicit pipe dream construction. Specifically, Theorem~\ref{thm:main} follows from repeated application of the following claim.

\begin{thm}
\label{thm:main2-steps}
Let $w\in S_n$ be a fireworks permutation and let $P \in \PD(w)$ be a pipe dream with $\wt(P) < \wt(\overline{D(w)})$. For any $a$ with $\wt(P)_a < \wt(\overline{D(w)})_a$, there exists a pipe dream $Q \in \PD(w)$ satisfying:
\begin{itemize}
\item $\wt(Q)_b = \wt(P)_b$ for $b<a$
\item $\wt(Q)_a = \wt(P)_a + 1$
\item $\wt(Q)_b \leq \wt(P)_b$ for $b>a$.
\end{itemize}
\end{thm}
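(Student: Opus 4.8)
The plan is to produce $Q$ by an explicit modification of $P$ that is supported on rows $\geq a$: rows $b<a$ are then carried over verbatim, the first bullet holds for free, and the real content is that such a modification can be performed inside $\PD(w)$. I would work in the pipe-dream model — $\PD(w)$ is the set of pipe dreams whose reading word has Demazure product $w$, and $\wt(P)_i$ counts the crossings in row $i$ — and extract two facts. First, the standard lemma governing when adding a crossing at an elbow of $P'\in\PD(w)$ yields another element of $\PD(w)$ (a \emph{redundant} elbow, whose two strands have already crossed in the reading prefix) as opposed to a pipe dream for a permutation of length $\ell(w)+1$. Second, the fireworks hypothesis, in the form already cited in the introduction: there is a pipe dream $P^{\mathrm{max}}\in\PD(w)$ with $\wt(P^{\mathrm{max}})=\wt(\overline{D(w)})$, hence componentwise maximal among pipe dreams of $w$. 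Comparing row $a$ of $P$ with that of $P^{\mathrm{max}}$, the deficiency $\wt(P)_a<\wt(\overline{D(w)})_a$ produces at least one cell $(a,j_0)$ that is a crossing of $P^{\mathrm{max}}$ and an elbow of $P$; these are the candidate sites at which to enlarge row $a$.

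If some candidate $(a,j_0)$ is a redundant elbow of $P$, set $Q:=P\cup\{(a,j_0)\}$: only row $a$'s count changes, increasing by exactly one, and we are done. Otherwise every candidate elbow in row $a$ is non-redundant, and the plan is to modify $P$ inside rows $\geq a$ by a sequence of ladder moves, possibly together with one $K$-theoretic ladder move, whose net effect on the weight vector is exactly: rows $<a$ unchanged, row $a$ increased by one, each row $>a$ weakly decreased. Concretely, such a sequence transports a crossing of $P$ lying strictly below row $a$ upward into row $a$ — an ordinary ladder move pushes a single crossing into the row above without disturbing any other row's count, and a $K$-theoretic ladder move lets us leave a crossing behind in the lower row when the ordinary move is blocked — or, dually, one adds a crossing at a chosen candidate elbow $(a,j_0)$ and then deletes one or more compensating crossings from rows strictly below $a$ to restore the Demazure product. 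Either way rows above $P$ are never touched and the three bullet conditions hold.

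The main obstacle is to show that, in this second case, such a sequence always exists and stays in $\PD(w)$ — a statement that is \emph{false} for general $w$, so the fireworks condition must be used essentially here. One must decide which crossing below row $a$ to lift (equivalently, which candidate elbow to fill), check that the moves along the way are never obstructed and never transiently overfill a row below $a$, and check that the compensating deletions do not overshoot downward, so that no row below $a$ strictly increases. I expect the fireworks hypothesis to enter by making the data of $P$ relevant to the surgery — the interface of rows $\geq a$ with the fixed upper rows, together with the column content of $D(w)$ weakly below row $a$ — rigid enough that the columns lying between the source crossing and $j_0$ are ``full,'' so that each required ladder or $K$-ladder move is available; and I expect the cleanest organization to be an induction, on $n$ or on the number of rows at least $a$, whose inductive step is precisely this availability claim. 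The degenerate case in which $P$ has no crossing strictly below row $a$ should collapse into the easy case: there the deficiency together with the column content ought to force a candidate elbow in row $a$ to be redundant.

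Finally, Theorem~\ref{thm:main} follows by iterating Theorem~\ref{thm:main2-steps}: starting from a pipe dream of weight $\alpha\in\supp(\mathfrak S_w)$, repeated application — each step raising one coordinate of the weight while keeping all earlier coordinates fixed — produces pipe dreams of every weight in $[\alpha,\wt(\overline{D(w)})]$, while the reverse inclusion is the two divisibility facts recalled above, that every monomial of $\mathfrak G_w$ divides $\mathbf x^{\wt(\overline{D(w)})}$ and is divisible by some monomial of $\mathfrak S_w$.
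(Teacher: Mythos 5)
Your setup is sound: reducing to a surgery supported on rows $\geq a$, locating a deficient elbow in row $a$, and disposing of the case where that elbow is ``redundant'' (the paper's Case 0, where the added tile is a fake crossing) all match the paper. The derivation of Theorem~\ref{thm:main} from the present statement is also correct. But the heart of the theorem is the non-redundant case, and there your proposal stops at the level of intention: you write that you ``expect'' the fireworks hypothesis to make the required ladder and $K$-theoretic ladder moves available, and that the degenerate case ``ought to'' collapse to the easy one. That availability claim \emph{is} the theorem; nothing in the proposal establishes it, and as you yourself note the analogous claim is false for general $w$, so it cannot be waved through.

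The paper's mechanism is also genuinely different from a chain of ladder moves, and the difference matters. Rather than transporting a single crossing upward row by row, the paper performs one global swap along a pair of pipes: it finds a bump tile $T$ in row $a$ whose primary pipe $P^{(i)}$ is not a left-to-right maximum of $w$ (Proposition~\ref{prop:maximal-weight} plus Lemma~\ref{lem:good-pipes}), then either (Case 1) uses the fireworks condition to force $P^{(i)}$ to cross the secondary pipe of $T$ at some real crossing $T'$ below row $a$, or (Case 2) uses Lemma~\ref{lem:exists-bump} to produce a different pair of pipes with the same configuration; in either case it crosses at $T$, uncrosses at $T'$, and deletes the fake crossings of the two pipes in between. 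The correctness of this surgery — that the Demazure product is preserved and that no row below $a$ increases — rests on Lemmas~\ref{lem:exists-bump}, \ref{lem:three-in-a-tile}, and \ref{lem:case-2-propagates}, which control how every third pipe meets the two pipes being modified, plus a termination argument for the iteration when the first swap fails to raise $\wt_a$. None of these ingredients, nor substitutes for them, appear in your proposal; in particular you give no criterion for which crossing below row $a$ to remove, no argument that the compensating deletions preserve the permutation, and no proof that rows between $a$ and the removed crossing do not increase. The proposal is a reasonable plan but not a proof.
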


A motivation for our proof of Theorem~\ref{thm:main2-steps} is the observation (Remark~\ref{rem:good-pipes-only}) that, for fireworks permutations, the weights of maximal-degree pipe dreams are in some sense ``inclusion-maximal'' (rather than just componentwise- or degree-maximal).

A special subclass of fireworks permutations called \emph{layered} permutations has previously appeared in various maximization problems in Schubert calculus. A longstanding conjecture of Merzon--Smirnov \cite{ms16} states that the maximal value of the principal specialization $\mathfrak S_w(1,\dots,1)$ is achieved at a layered permutation (see \cite{mpp19}). The maximal value of the principal specialization $\mathfrak G^{(\beta=1)}_w(1,\dots,1)$ of the $\beta=1$ Grothendieck polynomial is asymptotically achieved at a layered permutation \cite{mppy24}. The maximal value of $\deg(\mathfrak G_w) - \deg(\mathfrak S_w)$ is achieved at a layered permutation \cite{psw24}. The size $|\supp(\mathfrak S_w)|$ of the support of $\mathfrak S_w$ is conjecturally maximized at layered permutations \cite{gl24}.

We extend this list with a corollary of similar flavor. The support formula in Theorem~\ref{thm:main} implies that layered permutations maximize Newton polytopes of fireworks Grothendieck polynomials with respect to inclusion.
\begin{cor}
\label{cor:maximal-newton}
Let $w$ be a fireworks permutation and write $\pi(w)$ for the layered permutation whose block sizes are the lengths of descending runs of $w$. Then $\supp(\mathfrak G_{\pi(w)})\supseteq\supp(\mathfrak G_w)$.
\end{cor}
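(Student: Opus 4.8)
The plan is to apply Theorem~\ref{thm:main} to $w$ and to the layered (hence fireworks) permutation $\pi(w)$, and then to reduce the corollary to two facts: that $\wt(\overline{D(w)})=\wt(\overline{D(\pi(w))})$, and that every monomial of $\mathfrak S_w$ is divisible by some monomial of $\mathfrak S_{\pi(w)}$. Granting these, any $\gamma\in\supp(\mathfrak G_w)$ lies in $[\alpha,\wt(\overline{D(w)})]$ for some $\alpha\in\supp(\mathfrak S_w)$; choosing $\beta\in\supp(\mathfrak S_{\pi(w)})$ with $\beta\leq\alpha$ gives $\beta\leq\alpha\leq\gamma\leq\wt(\overline{D(w)})=\wt(\overline{D(\pi(w))})$, so $\gamma\in[\beta,\wt(\overline{D(\pi(w))})]\subseteq\supp(\mathfrak G_{\pi(w)})$, which is the claim.

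Both facts will rest on describing the columns of $D(v)$ for a fireworks permutation $v\in S_n$. Let the descending runs of $v$ occupy the position intervals $[p_{k-1}+1,p_k]$ for $1\leq k\leq r$ (with $p_0=0$ and $p_r=n$). For fireworks $v$ the left-to-right maxima of $v$ are exactly the run leaders $v_{p_{k-1}+1}$, so a column of $D(v)$ is nonempty precisely when it is indexed by an ``interior'' value $v_{d+1}$ with $d\in\bigcup_{k}[p_{k-1}+1,p_k-1]$; moreover the column indexed by $v_{d+1}$ has row set $C_d(v):=\{i\leq d\colon v_i>v_{d+1}\}$ and its lowest box lies in row $d$ (because $v_d>v_{d+1}$ within the decreasing run). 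Since $\overline{D(v)}$ is obtained by pushing each column up to the top, $\wt(\overline{D(v)})_i$ counts the nonempty columns of $D(v)$ whose lowest box lies in a row $\geq i$, and the multiset of these rows is simply $\bigcup_k[p_{k-1}+1,p_k-1]$. This depends only on the composition of run lengths of $v$, which is the same for $w$ and $\pi(w)$; hence $\wt(\overline{D(w)})=\wt(\overline{D(\pi(w))})$, and the assignment sending the nonempty column of $D(w)$ with lowest box in row $d$ to the nonempty column of $D(\pi(w))$ with lowest box in row $d$ is a bijection between nonempty columns.

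For the divisibility statement I would use the column Minkowski decomposition $\mathcal S_{D(v)}=\mathcal S_{D(v)_1}+\cdots+\mathcal S_{D(v)_n}$ of the Schubitope together with the Adve--Robichaux--Yong criterion \cite{ary21,mty19,fms18}: $\mathbf x^\alpha$ appears in $\mathfrak S_v$ if and only if $\alpha=\sum_j e_{T_j}$ where, for each $j$, writing the column $D(v)_j$ as $\{c_1<\cdots<c_s\}$, the set $T_j=\{t_1<\cdots<t_s\}$ satisfies $t_l\leq c_l$ for all $l$. Fix $d\in\bigcup_k[p_{k-1}+1,p_k-1]$, with $d$ in the $k$-th run interval, and compare the column $C=C_d(w)=\{i\leq d\colon w_i>w_{d+1}\}$ of $D(w)$ with the matching column $C'=C_d(\pi(w))$ of $D(\pi(w))$. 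A direct check using the explicit form of $\pi(w)$ shows $C'=\{p_{k-1}+1,\dots,d\}$; since the decreasing run of $w$ on $[p_{k-1}+1,p_k]$ forces every $i\in[p_{k-1}+1,d]$ into $C$ while $C\setminus C'\subseteq[1,p_{k-1}]$, we get $C'\subseteq C$ and, more precisely, that $C'$ is the set of the $|C'|$ largest elements of $C$. Consequently, if $T$ is a transversal of $C$ in the sense above, the set $T'$ of the $|C'|$ largest elements of $T$ is a transversal of $C'$, and $T'\subseteq T$ gives $e_{T'}\leq e_T$. Given $\mathbf x^\alpha$ in $\mathfrak S_w$, one decomposes $\alpha=\sum_{d}e_{T_d}$ column by column, replaces each $T_d$ by $T'_d$, and uses the column bijection to conclude that $\mathbf x^{\sum_d e_{T'_d}}$ is a monomial of $\mathfrak S_{\pi(w)}$ dividing $\mathbf x^\alpha$.

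The hard part will be the divisibility fact: once the columns of $D(w)$ and $D(\pi(w))$ are matched by the row of their lowest box (which the computation proving $\wt(\overline{D(w)})=\wt(\overline{D(\pi(w))})$ already forces), the content is the observation that the layered column $C'$ sits at the ``top'' of the fireworks column $C$, so that a transversal of $C$ restricts to one of $C'$; the routine but essential step is verifying $C'=\{p_{k-1}+1,\dots,d\}$ and the containment $C'\subseteq C$. The equality of top-degree weights, by contrast, is a short computation once the Rothe diagram of a fireworks permutation is understood column by column.
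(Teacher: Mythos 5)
Your proposal is correct and is in substance the same argument as the paper's: the paper matches the column $D(w)_{w(i)}$ with $D(\pi(w))_{(\pi(w))(i)}$, shows the latter equals the top interval $[a,i-1]$ of the former with the same lowest row (Lemmas~\ref{lem:fireworks-layered-tips} and~\ref{lem:tip-inclusion}), and deduces the reverse inclusion of spanning-set polytopes (Lemma~\ref{lem:psp-inclusion}) by exactly your transversal-restriction argument. You merely unpack the $P_{\mathrm{sp}}(\SM(-))$ Minkowski-sum packaging into the equivalent pair of statements that the top weights $\wt(\overline{D(w)})=\wt(\overline{D(\pi(w))})$ agree and that every monomial of $\mathfrak S_w$ is divisible by one of $\mathfrak S_{\pi(w)}$.
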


\section*{Acknowledgements}
We thank Dave Anderson, Ian Cavey, Zachary Hamaker, Tomoo Matsumura, Karola M\'esz\'aros, Alejandro Morales, Greta Panova, Brendon Rhoades, Avery St.~Dizier, Alex Yong, and Tianyi Yu for many helpful and inspiring conversations. We especially thank Tianyi Yu for a careful reading of an earlier draft, and Avery St.~Dizier and Alex Yong for pointing out useful references which improved the exposition of this paper.

\section{Background}
\subsection*{Grothendieck polynomials and pipe dreams} For any $i\in[n-1]$, the {\color{darkred}\emph{divided difference operator}} $\del_i\colon \ZZ[x_1, \dots, x_n]\to\ZZ[x_1, \dots, x_n]$ is defined to be
\[
\del_i(f)\colonequals \frac{f - s_if}{x_i - x_{i+1}},
\]
where $s_i$ is the operator switching the variables $x_i$ and $x_{i+1}$, and the {\color{darkred}\emph{isobaric divided difference operator}} $\overline\del_i\colon \ZZ[x_1, \dots, x_n] \to \ZZ[x_1, \dots, x_n]$ is defined to be $\overline\del_i(f)\colonequals \del_i((1-x_{i+1})f)$.

The {\color{darkred}\emph{Schubert polynomial}} $\mathfrak S_w(\mathbf x)$ and {\color{darkred}\emph{Grothendieck polynomial}} $\mathfrak G_w(\mathbf x)$ of $w\in S_n$ are defined by the recursions
\[
\mathfrak S_w(\mathbf x) = \begin{cases} x_1^{n-1}\dots x_{n-1}&\textup{ if } w = w_0,\\\del_i(\mathfrak S_{ws_i}(\mathbf x))&\textup{ if } \ell(w) < \ell(ws_i),\end{cases}\quad\textup{ and } \quad \mathfrak G_w(\mathbf x) = \begin{cases} x_1^{n-1}\dots x_{n-1} &\textup{ if } w = w_0,\\ \overline\del_i(\mathfrak G_{ws_i}(\mathbf x)) &\textup{ if } \ell(w) < \ell(ws_i).\end{cases}
\]
The Schubert polynomial is the lowest degree part of the corresponding Grothendieck polynomial. Schubert and Grothendieck polynomials can be computed using combinatorial objects called pipe dreams.

A {\color{darkred}\emph{pipe dream}} is a tiling of a staircase grid $\{(i,j)\in[n]\times[n]\colon i + j \leq n\}$ using {\color{darkred}\emph{cross tiles}} $\ptile$ and {\color{darkred}\emph{bump tiles}} $\bumptile$. By placing half-bump tiles $\jtile$ on the antidiagonal $\{(i,j)\in[n]\times[n]\colon i+j = n+1\}$, a pipe dream can be viewed as a network of $n$ pipes running from the top of the grid to the left.

We label the pipes of a pipe dream $P$ with the numbers $1$ through $n$ along the top edge. Tracing the pipes as they travel from the top of the grid to the left and ignoring any crossings between any pair of pipes which have already crossed (i.e., replacing redundant cross tiles with bump tiles) yields a string of numbers $(\del(P))(1), (\del(P))(2), \dots, (\del(P))(n)$ that defines a permutation $\del(P) \in S_n$.

\begin{defn}
We say that a cross tile $\ptile$ of a pipe dream is a {\color{darkred}\emph{real crossing}} if its two pipes really cross, i.e., if the pipe entering in the top exits from the bottom. Otherwise, we say the cross tile $\ptile$ is a {\color{darkred}\emph{fake crossing}}.
\end{defn}

\begin{figure}[ht]
\includegraphics[scale=1.3]{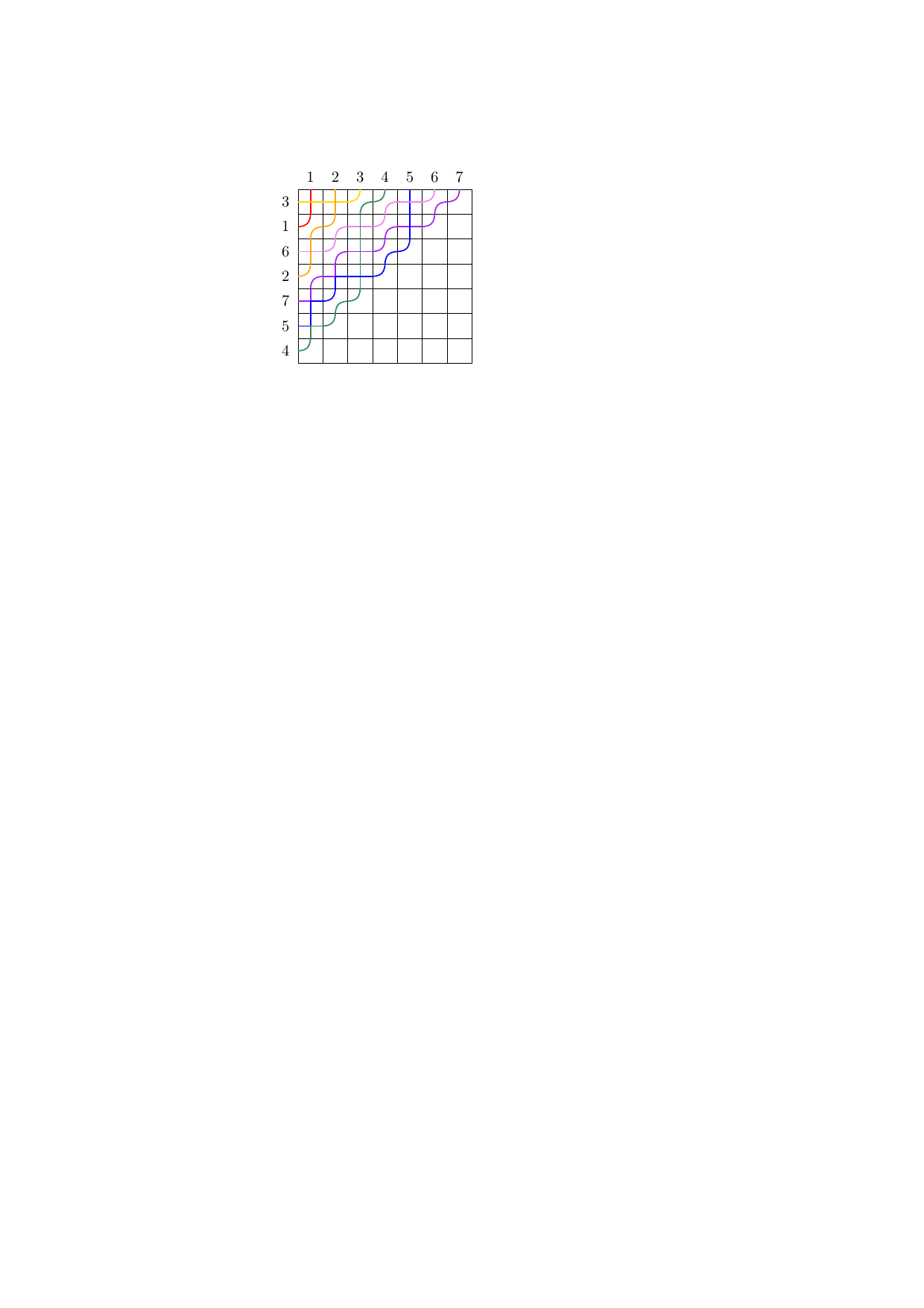}
\caption{A pipe dream $P$ for the fireworks permutation $w = 3162754$, with pipes colored by their label. The pipe dream $P$ is nonreduced, its weight is $(3,2,2,2,1,1,0)$, and it has fake crossings in $(4,2)$, $(5,1)$, and $(6,1)$.}
\end{figure}

For $w\in S_n$, we write $\PD(w)$ for the set of pipe dreams whose associated permutation is $w$. 

A pipe dream $P$ is {\color{darkred}\emph{reduced}} if the number of cross tiles is equal to $\ell(\del(P))$ (i.e., no fake crossings). 

The {\color{darkred}\emph{weight}} of a pipe dream $P$ is the integer vector $\wt(P)\in\ZZ_{\geq 0}^n$ whose $i$-th component records the number of crossing tiles in row $i$.

Pipe dreams give a combinatorial model for Schubert and Grothendieck polynomials:

\begin{thm}[{\cite[Thm 2.3]{fk94}; see also \cite[Cor 5.4]{km04}, \cite[Thm 6.1]{weigandt21}}]
\label{thm:pd-generates}
The Schubert and Grothendieck polynomials are given by the formula
\begin{align*}
\mathfrak S_w &= \sum_{\substack{P\in\PD(w),\\ P\textup{ reduced}}}\mathbf x^{\wt(P)},\\
\mathfrak G_w &= \sum_{P\in\PD(w)}(-1)^{|P| - \ell(w)}\mathbf x^{\wt(P)}.
\end{align*}
\end{thm}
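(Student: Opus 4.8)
The plan is to establish the formula for $\mathfrak G_w$ first; the formula for $\mathfrak S_w$ then drops out by extracting lowest-degree parts. Recall that every $P\in\PD(w)$ satisfies $|P|\geq\ell(w)$, with equality exactly when $P$ is reduced: reading off $\del(P)=w$ produces one real crossing for each of the $\ell(w)$ inversions of $w$, and the surplus cross tiles are precisely the fake crossings. Hence the terms of minimal total degree in $\sum_{P\in\PD(w)}(-1)^{|P|-\ell(w)}\mathbf x^{\wt(P)}$ are exactly those indexed by reduced $P$, all carrying sign $(-1)^{0}=+1$, so no cancellation occurs in that bottom layer. Since $\mathfrak S_w$ is by definition the lowest-degree homogeneous component of $\mathfrak G_w$ and is nonzero, once the $\mathfrak G_w$ formula is known, comparing lowest-degree parts yields $\mathfrak S_w=\sum_{P\textup{ reduced}}\mathbf x^{\wt(P)}$ (and incidentally that reduced pipe dreams of $w$ exist).

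For the $\mathfrak G_w$ formula I would induct on the colength $\binom{n}{2}-\ell(w)$. In the base case $w=w_0$, the staircase grid has exactly $\binom{n}{2}=\ell(w_0)$ boxes, so the inequality $|P|\geq\ell(w_0)$ forces every $P\in\PD(w_0)$ to be the all-crosses pipe dream; this $P$ is reduced, has $\wt(P)=(n-1,n-2,\dots,1,0)$, and $(-1)^{|P|-\ell(w_0)}=1$, so the right-hand side equals $x_1^{n-1}\cdots x_{n-1}=\mathfrak G_{w_0}$. For the inductive step, take $w\neq w_0$, choose an ascent $w(i)<w(i+1)$, and put $v=ws_i$, so that $\ell(v)=\ell(w)+1$ and $v$ has smaller colength. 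The inductive hypothesis gives $\sum_{P\in\PD(v)}(-1)^{|P|-\ell(v)}\mathbf x^{\wt(P)}=\mathfrak G_v$, and the defining recursion gives $\mathfrak G_w=\overline\del_i(\mathfrak G_v)$; so it remains to prove
\[
\overline\del_i\!\left(\sum_{P\in\PD(v)}(-1)^{|P|-\ell(v)}\mathbf x^{\wt(P)}\right)=\sum_{Q\in\PD(w)}(-1)^{|Q|-\ell(w)}\mathbf x^{\wt(Q)}.
\]

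The crux of the argument, and the step I expect to be the main obstacle, is to realize the operator $\overline\del_i$ combinatorially on pipe dreams. Since $\overline\del_i$ involves only $x_i$ and $x_{i+1}$, and $\wt(\cdot)_i,\wt(\cdot)_{i+1}$ count cross tiles in rows $i$ and $i+1$, one works two rows at a time: building on the Knutson--Miller mitosis operator and its $K$-theoretic refinement, one partitions $\PD(w)$ into blocks indexed by the pipe dreams of $v$, so that the block attached to $P\in\PD(v)$---obtained by a move that modifies $P$ only in rows $i$ and $i+1$---has signed generating function equal to $\overline\del_i$ applied to $(-1)^{|P|-\ell(v)}\mathbf x^{\wt(P)}$; summing over $P$ then gives the claim. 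Writing $\overline\del_i(f)=\del_i(f)-\del_i(x_{i+1}f)$ makes clear where the difficulty lies: the $\del_i(f)$ part is governed by the reduced (ordinary mitosis) portion of the move, while the correction $-\del_i(x_{i+1}f)$ must be matched against pipe dreams of $w$ that acquire one extra crossing, and the careful bookkeeping of these extra (often fake) crossings is exactly what reproduces the signs $(-1)^{|Q|-\ell(w)}$. Checking that the move is well defined, that the blocks reconstruct all of $\PD(w)$ with the correct multiplicities, and that fake crossings are tracked correctly throughout is the technical core.

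Finally, I would note that this is a classical result with several independent proofs, any of which may be cited directly: Fomin--Kirillov \cite{fk94} deduce it from the Yang--Baxter relations satisfied by the operators attached to cross tiles, Knutson--Miller \cite{km04} obtain it from a Gr\"obner degeneration of matrix Schubert varieties onto the Stanley--Reisner scheme of the pipe-dream complex, and Weigandt \cite{weigandt21} gives a proof compatible with the bumpless-pipe-dream formalism.
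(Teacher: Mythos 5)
The paper offers no proof of this theorem: it is imported as background, with the proof deferred to Fomin--Kirillov (and the alternative treatments of Knutson--Miller and Weigandt). So your closing paragraph, which proposes citing these sources, is exactly what the paper does, and on that reading there is nothing to verify. Your surrounding observations are also fine: the base case $w=w_0$, the inequality $|P|\ge\ell(w)$ with equality precisely for reduced pipe dreams, and the extraction of the $\mathfrak S_w$ formula as the lowest-degree homogeneous component of the $\mathfrak G_w$ formula are all correct.

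Read as an actual proof, however, the sketch has a genuine gap at exactly the step you flag as the technical core, and that step is where all the content of the theorem lives. The induction on colength reduces everything to realizing $\overline\del_i$ combinatorially: one must exhibit a rule assigning to each $P\in\PD(v)$ (with $v=ws_i$, $\ell(v)=\ell(w)+1$) a set of pipe dreams in $\PD(w)$, show these sets exhaust $\PD(w)$ with the right multiplicities and signs, and show the signed weights sum correctly. You assert that such a ``K-theoretic mitosis'' exists but construct nothing and verify none of its properties, so the induction does not close. Moreover, the block-by-block form in which you state it --- that the block attached to a single $P$ has signed generating function $\overline\del_i$ applied to $(-1)^{|P|-\ell(v)}\mathbf x^{\wt(P)}$ --- is stronger than what mitosis-type arguments actually provide and is already problematic in the cohomological case: $\del_i$ applied to a single monomial is negative when $\wt(P)_i<\wt(P)_{i+1}$, and those negative terms must cancel against contributions from other pipe dreams, so the identity can only be expected after a global grouping or a pairing/involution argument, not pipe dream by pipe dream. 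Either carry out such a construction in detail (handling the correction term $-\del_i(x_{i+1}f)$ and the creation of fake crossings), or simply cite \cite{fk94} as the paper does; as written, the proposal is an outline of a known strategy rather than a proof.
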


\begin{exa}
The set $\PD(2413)$ contains three pipe dreams:
$$
\begin{tikzpicture}[x=1em,y=1em,thick,rounded corners, color = blue]
\draw[step=1,gray,ultra thin] (0,0) grid (4,4);
\draw[color=blue] (0,0.5)--(0.5, 0.5)--(0.5, 1.5)--(1.5, 1.5)--(1.5, 3.5)--(2.5, 3.5)--(2.5, 4);
\draw[color=blue] (0,1.5)--(0.5, 1.5)--(0.5, 4);
\draw[color=blue] (0,2.5)--(2.5, 2.5)--(2.5, 3.5)--(3.5, 3.5)--(3.5, 4);
\draw[color=blue] (0,3.5)--(1.5, 3.5)--(1.5, 4);
\end{tikzpicture}
\quad\quad\quad
\begin{tikzpicture}[x=1em,y=1em,thick,rounded corners, color = blue]
\draw[step=1,gray,ultra thin] (0,0) grid (4,4);
\draw[color=blue] (0,0.5)--(0.5, 0.5)--(0.5, 1.5)--(1.5, 1.5)--(1.5, 2.5)--(2.5, 2.5)--(2.5, 4);
\draw[color=blue] (0,1.5)--(0.5, 1.5)--(0.5, 4);
\draw[color=blue] (0,2.5)--(1.5, 2.5)--(1.5, 3.5)--(3.5, 3.5)--(3.5, 4);
\draw[color=blue] (0,3.5)--(1.5, 3.5)--(1.5, 4);
\end{tikzpicture}
\quad\quad\quad
\begin{tikzpicture}[x=1em,y=1em,thick,rounded corners, color = blue]
\draw[step=1,gray,ultra thin] (0,0) grid (4,4);
\draw[color=blue] (0,0.5)--(0.5, 0.5)--(0.5, 1.5)--(1.5, 1.5)--(1.5, 3.5)--(3.5, 3.5)--(3.5, 4);
\draw[color=blue] (0,1.5)--(0.5, 1.5)--(0.5, 4);
\draw[color=blue] (0,2.5)--(2.5, 2.5)--(2.5, 3.5)--(2.5, 4);
\draw[color=blue] (0,3.5)--(1.5, 3.5)--(1.5, 4);
\end{tikzpicture}
$$
The first two pipe dreams are reduced and the last pipe dream is not, so Theorem~\ref{thm:pd-generates} implies that
\begin{align*}
    \mathfrak S_{2413} &= x_1x_2^2 + x_1^2x_2,\textup{ and}\\
    \mathfrak G_{2413} &= x_1x_2^2 + x_1^2x_2 - x_1^2x_2^2.
\end{align*}
\end{exa}

We remark that $\mathfrak S_w$ and $\mathfrak G_w$ can also be computed using many other combinatorial models, such as bumpless pipe dreams \cite{lls21,lls23,weigandt21}, hybrid pipe dreams \cite{ku23}, and vines \cite{bgnst25}. There has also been recent interest in the highest degree part of $\mathfrak G_w$, called the {\color{darkred}\emph{Castelnuovo--Mumford polynomial}} in \cite{psw24}, which we denote $\mathfrak G_w^{\mathrm{top}}$; they can be computed via bumpless vertical-less pipe dreams \cite{cy25}.

\subsection*{Combinatorics of pipe dreams and fireworks permutations}
For any pipe dream $P\in\PD(w)$, we write $P^{(i)}$ for the pipe entering in the $i$-th column and exiting from the $w(i)$-th row.

Each tile $T$ of a pipe dream in the region $\{(i,j)\in[n]\times[n]\colon i + j \leq n\}$ involves exactly two pipes entering in the top and right edges of and exiting from the bottom and left edges of $T$.
\begin{defn}
The {\color{darkred}\emph{primary pipe}} of $T$ is the pipe $P^{(i)}$ that exits from the bottom edge of $T$, and the {\color{darkred}\emph{secondary pipe}} of $T$ is the pipe $P^{(j)}$ that exits from the left edge of $T$.
\end{defn}
\begin{defn}
We say that two pipes $P^{(i)}$ and $P^{(j)}$ {\color{darkred}\emph{cross}} at row $r$ if they are involved in a real cross tile in row $r$.
\end{defn}

\begin{defn}

Let $T$ be a tile of a pipe dream $P$, say in position $(i,j)$. We say that a tile $S\neq T$ appears {\color{darkred}\emph{after}} $T$ if it is in the region $\{(a,b)\colon a > i \textup{ or } a = i, b < j\}$; otherwise we say it appears {\color{darkred}\emph{before}} $T$. See Figure~\ref{fig:before-after}. (With this convention, pipes travel ``forward'' as they go from the top edge of the grid to the left edge.)
\end{defn}

\begin{lem}
\label{lem:good-pipes}
Let $P\in\PD(w)$. Let $T$ be a cross tile of $P$ in row $r$ and let $P^{(i)}$ denote the primary pipe of $T$. Then $i$ is not a left-to-right maximum in the string of numbers $w(r)\,w(r+1)\,\dots\,w(n)$. 
\end{lem}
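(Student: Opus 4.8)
The plan is to produce, via the \emph{secondary} pipe of $T$, an explicit witness to the failure of the left-to-right maximum condition. Write $T=(r,c)$ and let $P^{(j)}$ be the secondary pipe of $T$, i.e.\ the pipe exiting its left edge; since a pipe visits no tile twice (it travels steadily downward and leftward), the two pipes through $T$ are distinct, so $j\ne i$. I would prove the single claim
\[
  j>i \qquad\text{and}\qquad r\le w(j)<w(i),
\]
and then observe that it implies the lemma, $P^{(j)}$ being the obstruction: it is a pipe of index $j>i$ that exits the grid in some row of $\{r,r+1,\dots,w(i)-1\}$, and the presence of such a pipe is exactly what prevents $i$ from being a left-to-right maximum in $w(r)\,w(r+1)\cdots w(n)$. (One also records, as a by-product, $w(i)>r$.)

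For the claim, note first that for \emph{any} tile in position $(r,c)$ the pipe exiting its bottom edge passes into column $c$ of row $r+1$ while the pipe exiting its left edge passes into column $c-1$ of row $r$; hence immediately after $T$ the secondary pipe $P^{(j)}$ lies one column to the left of the primary pipe $P^{(i)}$, and moreover $P^{(j)}$ then runs leftward along row $r$, so it exits the grid in a row $\ge r$: $w(j)\ge r$. It remains to compare $P^{(i)}$ and $P^{(j)}$ globally. In the network computing $\del(P)=w$ two pipes cross at most once, because a repeated meeting at a cross tile is declared fake; and the pair $\{P^{(i)},P^{(j)}\}$ has completed its (at most one) crossing by the time it leaves $T$ — at $T$ if $T$ is a real crossing, and strictly before $T$ if $T$ is a fake crossing, this being the very meaning of the term. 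Consequently these two pipes never cross each other after leaving $T$, so $P^{(j)}$ stays to the left of $P^{(i)}$ at every later row the two share and hence exits the grid in an earlier row: $w(j)<w(i)$. On the other hand the top edge precedes their one crossing, so their order there is the reverse of their order just after $T$; since $P^{(j)}$ sits to the left of $P^{(i)}$ just after $T$, it enters to the right of $P^{(i)}$, giving $j>i$. This proves the claim.

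The step I expect to require the most care is precisely this comparison: one must handle the real/fake dichotomy at $T$ without slipping — the two kinds of crossing route the pipes of $T$ through it in opposite ways, yet in either case the pair $\{P^{(i)},P^{(j)}\}$ has done all of its (at most one) crossing by the time it exits $T$, and it is this uniform fact that licenses both inferences ``exits in the post-$T$ left/right order'' and ``enters in the reverse of that order''. The two supporting facts — that a pair of pipes crosses at most once, and that a pair exits the grid reversed from its entry order exactly when it crosses — are immediate from the definition of $\del$, but threading them through the local geometry at $T$ is where the argument really lives.
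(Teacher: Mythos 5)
Your proof is correct and takes essentially the same route as the paper's: both identify the secondary pipe $P^{(j)}$ of $T$ as the witness, showing $j>i$ and that $P^{(j)}$ exits the grid in a row that is at least $r$ but strictly precedes the exit row of $P^{(i)}$. You supply more detail than the paper (which simply asserts $j>i$ and that $P^{(i)}$ stays below $P^{(j)}$ after $T$), in particular the uniform handling of the real/fake dichotomy at $T$; the only quibble is that under the convention that the left edge reads $w(1)\,w(2)\,\dots\,w(n)$, the exit row of $P^{(j)}$ is $w^{-1}(j)$ rather than $w(j)$ --- a purely notational slip inherited from the paper's own definition of $P^{(i)}$.
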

\begin{proof}
Let $P^{(j)}$ denote the secondary tile of $T$, so that $j> i$. In the region after $T$, pipe $P^{(i)}$ is below $P^{(j)}$. It follows that $j$ appears before $i$ in the string $w(r)\,w(r+1)\,\dots\,w(n)$.
\end{proof}

\begin{figure}[ht]
\begin{center}
\includegraphics{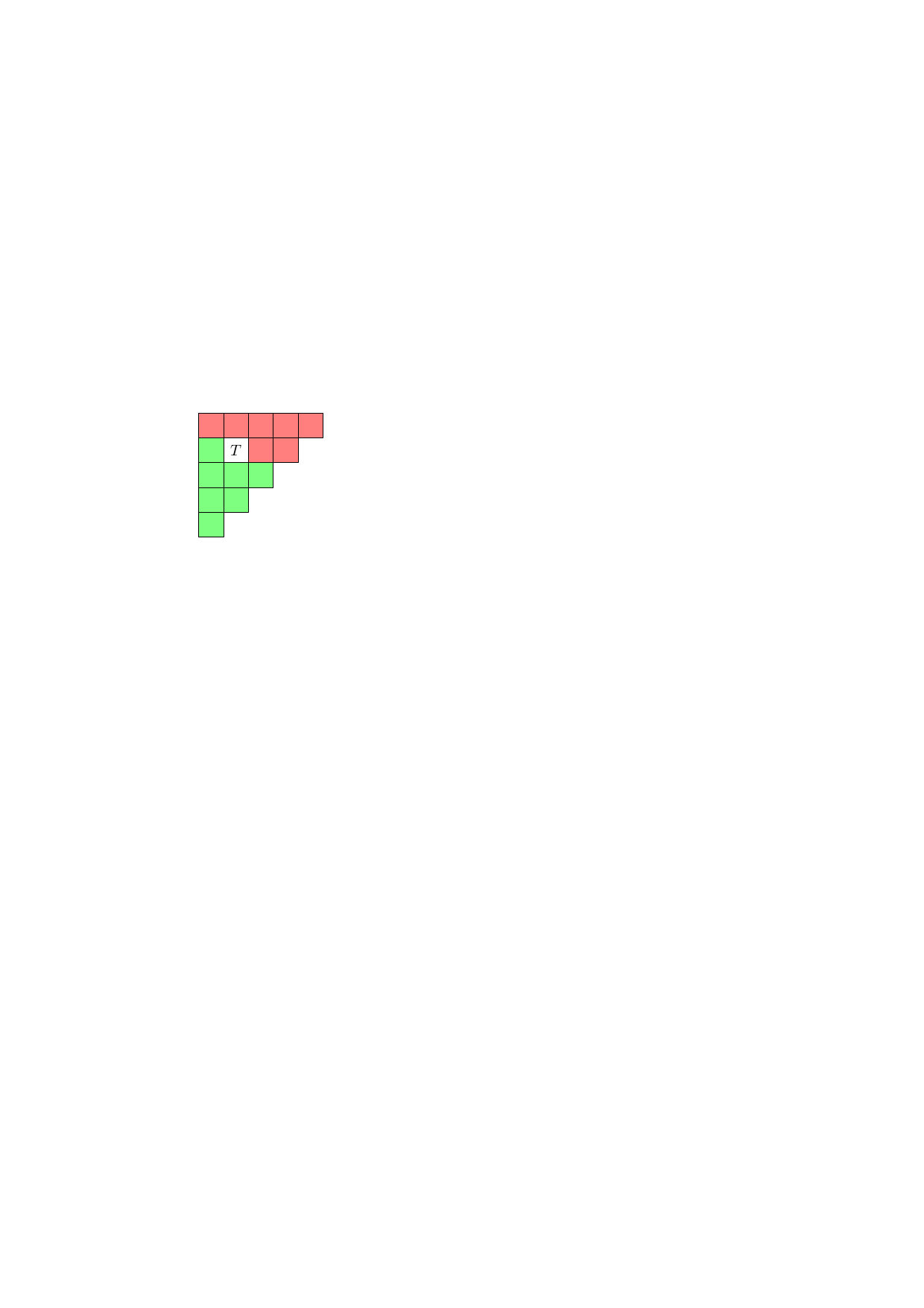}
\end{center}
\caption{Tiles after $T$ are shaded green; tiles before $T$ are shaded red.}
\label{fig:before-after}
\end{figure}

We will sometimes abuse language and talk about the region {\color{darkred}\emph{between}} tiles $T$ and $T'$ to mean the set of tiles after $T$ and before $T'$, or before $T$ and after $T'$, whichever is nonempty.

A permutation $w\in S_n$ is called {\color{darkred}\emph{fireworks}} if the initial terms of decreasing runs are increasing. Equivalently, $w$ is fireworks if there does not exist $i < j$ so that $w(j) < w(j+1) < w(i)$ (i.e., if $w$ is $3$-$12$ avoiding).

The initial terms of decreasing runs of fireworks permutations are exactly the left-to-right maxima of the string of numbers $w(1)\, w(2)\,\dots\,w(n)$.
\subsection*{Diagrams and fireworks permutations}
A {\color{darkred}\emph{diagram}} is defined to be a subset $D\subseteq [n]\times[m]$. We view $D = (D_1, \dots, D_m)$ as a subset of an $n\times m$ grid, where $D_j\colonequals \{i \in[n]\colon (i,j)\in D\}$ encodes the $j$\textsuperscript{th} column: an element $i \in D_j$ corresponds to a box in row $i$ and column $j$.

\begin{defn}
Let $w \in S_n$. The Rothe diagram $D(w)$ is defined to be
\[
D(w) = \{(i,j)\in[n]\times[n]\colon i < w^{-1}(j) \textup{ and } j < w(i)\}.
\]
(See Figure~\ref{fig:31542-rothe}, left.)
\end{defn}
\begin{figure}[ht]
\centering
\includegraphics{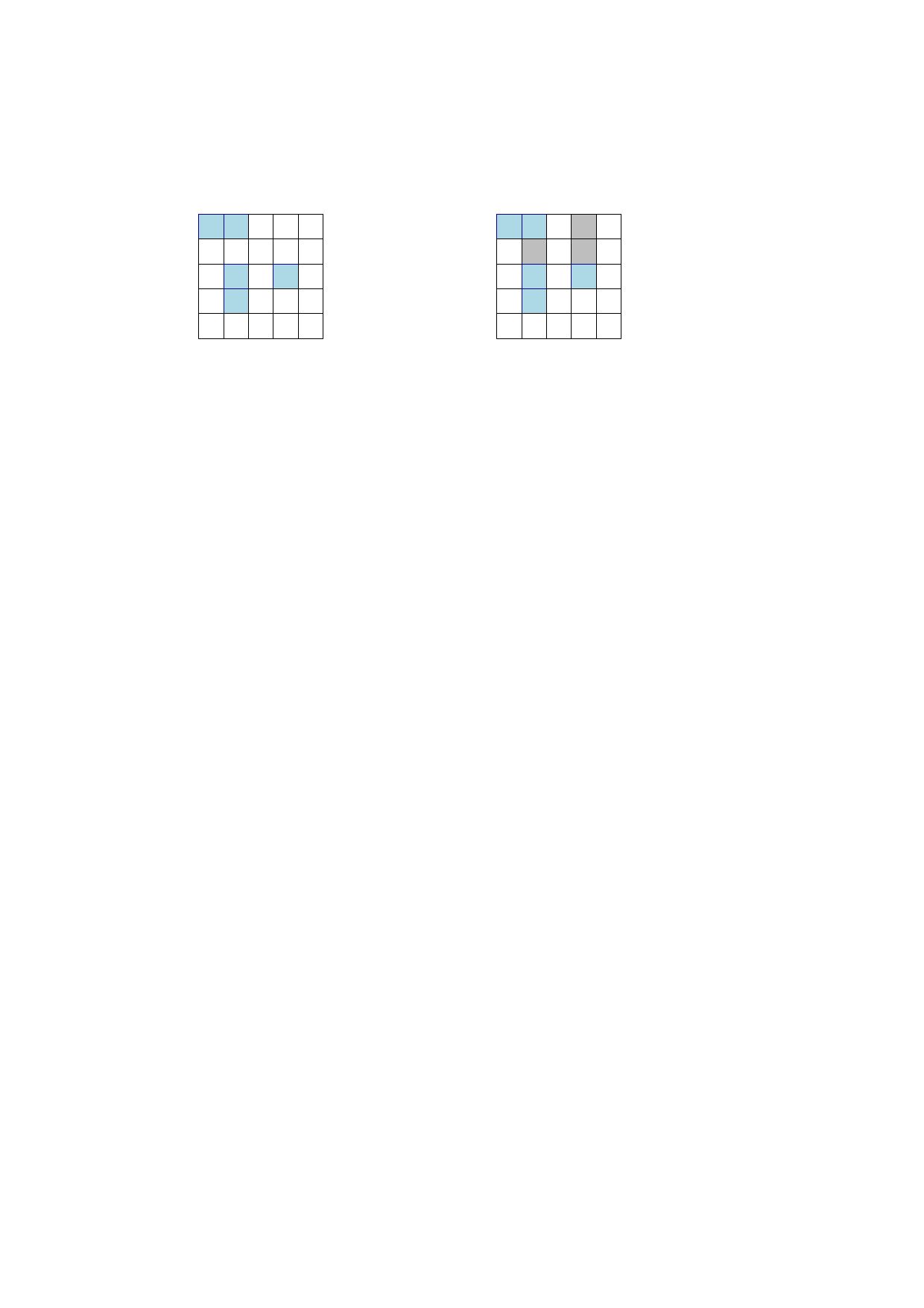}
\caption{The Rothe diagram for $w = 31542$ and its upwards closure.}
\label{fig:31542-rothe}
\end{figure}

\begin{defn}
Let $D$ be a diagram. The {\color{darkred}\emph{upwards closure}} $\overline D$ of $D$ is
\[
\overline D = \{(i,j)\in[n]\times[n]\colon (i',j)\in D\textup{ for some } i' \geq i\},
\]
i.e., the diagram consisting of squares which are above some square in $D$. (See Figure~\ref{fig:31542-rothe}, right.)
\end{defn}

\begin{prop}
\label{prop:maximal-weight}
Let $w\in S_n$ be fireworks. Then
\[
\wt(\overline{D(w)})_a = \#\{j\colon j>a\textup{ such that } w(j)\textup{ is not an initial term of a decreasing run}\}.
\]
\end{prop}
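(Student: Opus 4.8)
The plan is to unwind the definition of $\wt(\overline{D(w)})_a$ directly in terms of the Rothe diagram and then recognize the resulting combinatorial condition. By definition of the upwards closure, a column $j$ contributes a box to row $a$ of $\overline{D(w)}$ precisely when $(i,j)\in D(w)$ for some $i\geq a$, so that $\wt(\overline{D(w)})_a = \#\{j\colon (i,j)\in D(w)\text{ for some }i\geq a\}$. First I would substitute $j = w(k)$ — legitimate since $w$ is a bijection, so $k$ ranges over $[n]$ exactly as $j$ does — and expand the Rothe condition: $(i,w(k))\in D(w)$ means $i < w^{-1}(w(k)) = k$ and $w(k) < w(i)$. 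Hence column $w(k)$ contributes a box to row $a$ of $\overline{D(w)}$ if and only if there is an index $i$ with $a\leq i < k$ and $w(i) > w(k)$.

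The heart of the argument is then the claim that, for a fireworks permutation, such an index $i$ exists if and only if $k>a$ and $w(k)$ is not an initial term of a decreasing run. One direction needs nothing special: if $i$ exists, then $i<k$ with $w(i)>w(k)$ shows $w(k)$ is not a left-to-right maximum, and $a\leq i<k$ forces $k>a$. For the converse I would invoke the fireworks hypothesis in the form recorded before the statement, namely that the initial terms of the decreasing runs of a fireworks permutation are exactly its left-to-right maxima: if $w(k)$ is not such an initial term, then $k$ is not the first position of its decreasing run, so $k-1$ and $k$ lie in the same decreasing run and $w(k-1)>w(k)$; and since $k>a$ we have $k-1\geq a$, so $i = k-1$ works.

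Combining the two halves, column $w(k)$ contributes a box to row $a$ of $\overline{D(w)}$ exactly when $k>a$ and $w(k)$ is not an initial term of a decreasing run, and summing over $k\in[n]$ gives the claimed formula. The only points needing care are bookkeeping ones: when $k\leq a$ the interval $[a,k)$ is empty, so such $k$ never contribute (matching the condition $j>a$ in the statement), and when $w(k)$ is a left-to-right maximum no suitable $i$ can exist since then $w(i)<w(k)$ for every $i<k$. I do not anticipate a serious obstacle here; the fireworks hypothesis enters only through the identification of non-left-to-right-maxima with non-initial-terms of decreasing runs, and the rest is a direct translation of definitions.
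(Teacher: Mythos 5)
Your proof is correct and follows essentially the same route as the paper's: both arguments rest on the observation that $w(k)$ not being an initial term of a decreasing run forces $w(k-1)>w(k)$ (producing a box in row $k-1$ of column $w(k)$), together with the fireworks identification of initial terms of decreasing runs with left-to-right maxima (which makes the column empty otherwise). The only cosmetic difference is that the paper first pins down each column of $\overline{D(w)}$ as exactly $\{1,\dots,i-1\}$ or $\emptyset$ and then counts rows, whereas you test membership of row $a$ in each column directly.
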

\begin{exa}
The permutation $w = 31542$ from Figure~\ref{fig:31542-rothe} is fireworks. In this case,
\[
\{j\colon w(j)\textup{ is not an initial term of a decreasing run}\} = \{2,4,5\};
\]
for $a=1$, $2$, $3$, $4$, $5$, this set has $3$, $2$, $2$, $1$, $0$ elements greater than $a$, respectively.
\end{exa}
\begin{proof}[Proof of Proposition~\ref{prop:maximal-weight}]
Note first that if $w(i)$ is not an initial term of a decreasing run, then $w(i) < w(i-1)$ and so the $w(i)$-th column of $D(w)$ has a box in row $i-1$. It follows that the $w(i)$-th column of $\overline{D(w)}$ is empty if $w(i)$ is the initial term of a decreasing run and has boxes exactly in rows $1, \dots, i-1$ otherwise. The claim follows.
\end{proof}
\begin{rem}
\label{rem:good-pipes-only}
Combined with Lemma~\ref{lem:good-pipes}, Proposition~\ref{prop:maximal-weight} implies that for any pipe dream $P$ with $\wt(P) = \mathbf x^{\wt(\overline{D(w)})}$, a tile $T$ is a cross tile if and only if its primary pipe is not an initial term of a decreasing run.
\end{rem}

\subsection*{Support and M-convexity}
The {\color{darkred}\emph{support}} of a polynomial $f \in \ZZ[x_1, \dots, x_n]$, denoted $\supp(f)$, is defined to be the set $\{\alpha \in \ZZ_{\geq 0}^n \colon \mathbf x^\alpha \textup{ appears in } f\}$ of exponent vectors of monomials appearing with nonzero coefficient in $f$.

\begin{lem}
\label{lem:groth-lower-bounded}
For any $\beta\in\supp(\mathfrak G_w)$, there exists $\alpha\in\supp(\mathfrak S_w)$ satisfying $\alpha \leq \beta$.
\end{lem}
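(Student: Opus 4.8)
The plan is to pass from polynomials to pipe dreams via Theorem~\ref{thm:pd-generates}, and then build the required $\alpha$ by deleting fake crossings one at a time. The starting point is the observation that the pipe dream expansions of $\mathfrak S_w$ and $\mathfrak G_w$ have no internal cancellation: since every cross tile lies in exactly one row, any $P\in\PD(w)$ satisfies $|P|=\sum_i\wt(P)_i$, so all pipe dreams of a fixed weight $\gamma$ contribute to $\mathfrak G_w$ with the common sign $(-1)^{|\gamma|-\ell(w)}$. Hence $\supp(\mathfrak G_w)=\{\wt(P)\colon P\in\PD(w)\}$ and $\supp(\mathfrak S_w)=\{\wt(P)\colon P\in\PD(w)\textup{ reduced}\}$, and it suffices to show that every $P\in\PD(w)$ admits a reduced pipe dream $P'\in\PD(w)$ with $\wt(P')_i\leq\wt(P)_i$ for all $i$.

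To produce $P'$, I would induct on $|P|-\ell(w)$. If this quantity is $0$ then $P$ is already reduced and we take $P'=P$. If it is positive then, by the definition of reducedness, $P$ has a fake crossing $T$; I would form the pipe dream $\widetilde P$ obtained from $P$ by replacing $T$ with a bump tile. The set of cross tiles of $\widetilde P$ is exactly that of $P$ with $T$ removed and all other tiles unmoved, so if $T$ lies in row $r$ then $\wt(\widetilde P)=\wt(P)-e_r$ and $|\widetilde P|=|P|-1$; granting that $\del(\widetilde P)=w$, the inductive hypothesis applied to $\widetilde P$ yields a reduced $P'\in\PD(w)$ with $\wt(P')_i\leq\wt(\widetilde P)_i\leq\wt(P)_i$ for all $i$, completing the argument.

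The only real content is the claim that deleting a fake crossing preserves the associated permutation, and this is where I expect the bookkeeping to need care. The cleanest route is the Demazure-product description of $\del$: reading the cross tiles of $P$ in reading order (say, scanning each row from right to left, top row first) produces a word $s_{a_1}\cdots s_{a_m}$ in simple transpositions whose $0$-Hecke (Demazure) product $\ast$ equals $\del(P)=w$, and a cross tile at reading position $k$ is a fake crossing precisely when it fails to increase the partial product, i.e.\ $s_{a_1}\ast\cdots\ast s_{a_{k-1}}\ast s_{a_k}=s_{a_1}\ast\cdots\ast s_{a_{k-1}}$. Since replacing that cross tile by a bump tile simply deletes the letter $s_{a_k}$ from the word, this identity makes the Demazure product telescope: $\del(\widetilde P)=s_{a_1}\ast\cdots\ast s_{a_{k-1}}\ast s_{a_{k+1}}\ast\cdots\ast s_{a_m}=s_{a_1}\ast\cdots\ast s_{a_m}=w$. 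Note this works for \emph{any} fake crossing $T$, so no special choice of $T$ is needed; the fact that fake crossings correspond exactly to non-increasing steps of the reading word is standard for the pipe dream model, and one could alternatively just cite it.
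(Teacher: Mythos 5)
Your proof is correct and follows essentially the same route as the paper: pass to pipe dreams via Theorem~\ref{thm:pd-generates} and turn fake crossings into bump tiles to obtain a reduced pipe dream for $w$ of componentwise smaller weight. The paper does this in a single step (replacing all fake crossings at once), whereas you do it one crossing at a time with the Demazure-product telescoping as justification; this is just a more detailed account of the same idea, and your first paragraph's no-cancellation observation, while true, is not needed for this direction of the argument.
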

\begin{proof}
Replace the fake cross tiles of a pipe dream $P\in \PD(w)$ with bump tiles to obtain another pipe dream $P^{\mathrm{red}}\in\PD(w)$ which is necessarily reduced. Then $\wt(P^{\mathrm{red}})\leq \wt(P)$. The claim follows from Theorem~\ref{thm:pd-generates}.
\end{proof}

The {\color{darkred}\emph{Newton polytope}} of $f$, denoted $\Newton(f)$, is the convex hull $\conv(\supp(f))$ of its support. We say $f$ has {\color{darkred}\emph{saturated Newton polytope}}, abbreviated SNP, if $\supp(f) = \Newton(f)\cap\ZZ^n$. Monical, Tokcan, and Yong \cite{mty19} initiated the study of the SNP property and observed its ubiquity in algebraic combinatorics; see their paper for more details.

A set $S\subset\ZZ^n$ is {\color{darkred}\emph{M-convex}} if for any $\alpha,\beta\in S$ and any $i\in[n]$ for which $\alpha_i > \beta_i$, there is an index $j\in[n]$ for which $\alpha_j < \beta_j$ and $\alpha - \mathbf e_i + \mathbf e_j \in S$ and $\beta + \mathbf e_i - \mathbf e_j \in S$.

There is an interpretation of M-convexity in terms of certain polytopes called generalized permutahedra. A function $z\colon 2^{[n]}\to\RR$ is called {\color{darkred}\emph{submodular}} if
\[
z(I) + z(J) \geq z(I\cup J) + z(I\cap J)\qquad\textup{ for all } I,J\subseteq[n].
\]
\begin{defn}
A {\color{darkred}\emph{generalized permutahedron}} $P\subset\RR^n$ is a polytope defined by
\[
P = \left\{x\in\RR^n\colon \sum_{i\in I}x_i \leq z(I) \textup{ for all } I\subseteq[n]\textup{ and } \sum_{i=1}^n x_i = z([n])\right\}
\]
for some submodular function $z\colon 2^{[n]}\to\RR$ satisfying $z(\emptyset) = 0$.
\end{defn}
A set is M-convex if and only if it is the set of integer points of an integral generalized permutahedron \cite[Theorem 4.15]{murota03}. We refer to \cite{schrijver03} and \cite{postnikov09} for background on generalized permutahedra.
\begin{lem}[{\cite[Thm 44.6, Cor 46.2c]{schrijver03}}]
\label{lem:gp-minkowski}
The Minkowski sum $P_1 + \dots + P_k$ of generalized permutahedra is a generalized permutahedron. Furthermore,
\[
\left(\sum_{j=1}^n P_j\right) \cap \ZZ^n = \sum_{j=1}^n(P_j\cap\ZZ^n),
\]
that is to say, any integer point in a Minkowski sum of generalized permutahedra can be written as a sum of integer points of summands.
\end{lem}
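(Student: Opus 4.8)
The plan is to recover this from standard polymatroid theory in two steps: first compute the support function of $P_z$ via Edmonds' greedy algorithm to handle the Minkowski sum, and then deduce the integer decomposition property by reducing to $k=2$ and invoking the polymatroid intersection theorem.

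\emph{Step 1: the Minkowski sum is a generalized permutahedron.} By induction it suffices to treat $k=2$, so write $P_1 = P_{z_1}$ and $P_2 = P_{z_2}$ with $z_1,z_2$ submodular and $z_i(\emptyset)=0$. Given a cost vector $c\in\RR^n$, order the coordinates so that $c_{i_1}\geq\dots\geq c_{i_n}$ and set $S_t = \{i_1,\dots,i_t\}$, $S_0=\emptyset$. Edmonds' greedy algorithm shows that the vector $v$ with $v_{i_t} = z(S_t)-z(S_{t-1})$ lies in $P_z$ and maximizes $\langle c,\cdot\rangle$ over $P_z$, so the support function $h_{P_z}(c) = \sum_t c_{i_t}\bigl(z(S_t)-z(S_{t-1})\bigr)$ is \emph{linear in $z$}. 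Since support functions are additive under Minkowski sums, $h_{P_{z_1}} + h_{P_{z_2}} = h_{P_{z_1+z_2}}$, hence $P_1 + P_2 = P_{z_1+z_2}$; as $z_1+z_2$ is submodular with value $0$ at $\emptyset$, this is a generalized permutahedron. The same greedy description shows that when $z$ is integer-valued every vertex of $P_z$ is integral, so a Minkowski sum of integral generalized permutahedra is again an integral generalized permutahedron.

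\emph{Step 2: the integer decomposition property.} The inclusion $\sum_j(P_j\cap\ZZ^n)\subseteq\bigl(\sum_j P_j\bigr)\cap\ZZ^n$ is immediate, so the content is the reverse. Arguing by induction on $k$, it suffices to show: if $P$ and $P'$ are integral generalized permutahedra and $w\in(P+P')\cap\ZZ^n$, then $w = u+u'$ with $u\in P\cap\ZZ^n$ and $u'\in P'\cap\ZZ^n$ (in the inductive step take $P'=P_k$ and $P = P_1+\dots+P_{k-1}$, which is an integral generalized permutahedron by Step 1). Writing $P' = P_{z'}$, a routine rewriting of the defining inequalities — replace each set by its complement and use the equality constraint — shows that $w - P' = \{x\colon w-x\in P'\}$ equals the generalized permutahedron $P_{z''}$ where $z''(J) = \sum_{j\in J}w_j + z'([n]\setminus J) - z'([n])$; one checks $z''(\emptyset)=0$, that $z''$ is submodular because $J\mapsto z'([n]\setminus J)$ is (submodularity of $z'$ read off on complements), and that $z''$ is integral since $w$ and $z'$ are. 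The polytope $Q := P\cap(w - P')$ is nonempty, since any way of writing $w$ as a sum in $P+P'$ produces a point of $Q$. Now invoke the single nontrivial input: the intersection of two integral generalized permutahedra is an integral polytope whenever it is nonempty. Then any vertex $u$ of $Q$ is an integer point of $P$, and $u' := w - u$ is an integer point of $P'$, completing the decomposition.

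The first step is bookkeeping with support and submodular functions; the genuine obstacle is the integrality statement invoked at the end of Step 2, namely that the intersection of two integral polymatroid base polytopes over $[n]$ is integral. This is a form of Edmonds' polymatroid intersection theorem, proved by an uncrossing argument on tight sets (equivalently, by total dual integrality of the defining system). Rather than reprove it I would cite \cite[Thm 44.6, Cor 46.2c]{schrijver03}, as in the statement.
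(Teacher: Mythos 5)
Your proposal is correct, but there is nothing in the paper to compare it against: the lemma is quoted verbatim from \cite[Thm 44.6, Cor 46.2c]{schrijver03} and the paper supplies no proof of its own. Your two steps are in fact the standard derivation of those cited results --- the greedy/support-function computation giving $P_{z_1}+P_{z_2}=P_{z_1+z_2}$, and the reduction of integer decomposition to the integrality of $P\cap(w-P')$ --- and you correctly isolate the one genuinely hard ingredient, Edmonds' polymatroid intersection theorem (total dual integrality of the intersected system), which you then defer back to the same reference; so your write-up is an honest expansion rather than an independent proof, which is fine here. One caveat worth making explicit: the integer-decomposition statement is false for non-lattice generalized permutahedra (in $\RR^1$ take $P_1=P_2=\{1/2\}$, a generalized permutahedron whose Minkowski square contains the integer point $1$ while each summand contains no integer point), so the lemma implicitly assumes the summands are integral polytopes. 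Your Step 2 uses this hypothesis correctly, and it holds in all of the paper's applications (Schubert matroid polytopes, homogenized spanning-set polytopes, and their partial Minkowski sums via your Step 1), but it should be stated rather than left tacit.
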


We now introduce a generalized permutahedron called the \emph{Schubert matroid polytope} which will later be used to build more complex generalized permutahedra. 

\begin{defn}
\label{defn:schubert-matroid-polytope}
Given a subset $S \subseteq[n]$ consisting of elements $s_1 < \dots < s_r$, write $\mathcal B$ for the collection of sets $B\subseteq[s_r]$ consisting of elements $b_1 < \dots < b_r$ satisfying $b_i \leq s_i$ for all $i$. The indicator vectors $\{\zeta_B\colon B\in\mathcal B\}\subset \ZZ^n$ form an M-convex set, and their convex hull is a generalized permutahedron called the {\color{darkred}\emph{Schubert matroid polytope}}
\[
P(\mathrm{SM}(S))\colonequals\conv\{\zeta_B\colon B\in\mathcal B\}.
\]
See \cite[pg 2]{mty19} or \cite[Thm 10]{fms18} for a description of the rank function.
\end{defn}
Schubert matroid polytopes compute the support of Schubert polynomials.

\begin{thm}[{\cite[Thm 4, Thm 7]{fms18}}]
\label{thm:schub-support}
Let $w \in S_n$, and let $D_1, \dots, D_n$ denote the columns of $D(w)$. Then $\mathfrak S_w$ has M-convex support and
\[
\supp(\mathfrak S_w) = \left(\sum_{i=1}^n P(\SM(D_i))\right)\cap\ZZ^n.
\]
\end{thm}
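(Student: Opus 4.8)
The plan is to split the statement into two independent ingredients and then combine them: (i) that $\supp(\mathfrak S_w)$ is M-convex, and (ii) that $\Newton(\mathfrak S_w)=\sum_{i=1}^n P(\SM(D_i))$. These suffice. By Murota's characterization \cite{murota03}, an M-convex set $S$ is the set of lattice points of an integral generalized permutahedron $Q$; since $Q$ is a lattice polytope it equals the convex hull of its lattice points, so $\conv(S)=Q$ and hence $\conv(S)\cap\ZZ^n=S$. Applying this to $S=\supp(\mathfrak S_w)$ gives $\supp(\mathfrak S_w)=\Newton(\mathfrak S_w)\cap\ZZ^n$, and substituting (ii) yields the displayed formula together with the M-convexity assertion. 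So it remains to establish (i) and (ii).

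For ingredient (ii) I would invoke the theorem of Monical--Tokcan--Yong \cite{mty19} that $\Newton(\mathfrak S_w)$ equals the Schubitope $\mathcal S_{D(w)}$, and then carry out two elementary polytopal computations directly from the facet description of the Schubitope. First: for any diagram $D$ with columns $D_1,\dots,D_m$ one has $\mathcal S_D=\mathcal S_{D_1}+\dots+\mathcal S_{D_m}$. Indeed, $\mathcal S_D$ is cut out by the ``string'' inequalities $\sum_{i\in S}x_i\le\theta^S_D$ together with $\sum_i x_i=|D|$, the bound $\theta^S_D=\sum_j\theta^S_{D_j}$ is additive over columns by its very definition, and each $\theta^S_{D_j}$ is submodular in $S$, so the Minkowski sum of the column Schubitopes is cut out by exactly these inequalities. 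Second: for a one-column diagram $S\subseteq[n]$, the Schubitope $\mathcal S_S$ coincides with the Schubert matroid polytope $P(\SM(S))$, since the one-column string inequalities match the base-polytope inequalities coming from the rank function of $\SM(S)$ (Definition~\ref{defn:schubert-matroid-polytope} and the references there). Combining, $\Newton(\mathfrak S_w)=\sum_i\mathcal S_{D_i}=\sum_i P(\SM(D_i))$; each summand is a generalized permutahedron, and Lemma~\ref{lem:gp-minkowski} confirms the sum is one and that its lattice points distribute over the summands.

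For ingredient (i) --- the substance of the result --- I would induct on $\ell(w_0)-\ell(w)$ via the divided-difference recursion $\mathfrak S_w=\del_i(\mathfrak S_{ws_i})$ when $\ell(w)<\ell(ws_i)$. The base case $w=w_0$ is a single monomial, whose support is one point, hence M-convex. By Theorem~\ref{thm:pd-generates} every Schubert polynomial has nonnegative coefficients, so the induction reduces to the closure lemma: \emph{if $f$ has nonnegative coefficients and M-convex support, then $\del_i(f)$, when nonzero, has nonnegative coefficients and M-convex support.} This closure lemma is where I expect the real difficulty to lie. The approach I would take is to regard the exponents of the variables other than $x_i,x_{i+1}$ as indexing ``fibers,'' so that $f=\sum_c\mathbf x^c g_c(x_i,x_{i+1})$ and $\del_i$ acts fiberwise, $\del_i f=\sum_c\mathbf x^c\del_i(g_c)$, sending a single monomial $x_i^p x_{i+1}^q$ to $\pm1$ times $\sum_a x_i^a x_{i+1}^{p+q-1-a}$ with $a$ ranging over a contiguous integer interval determined by $p$ and $q$; thus within each fiber the new support is a union of full lattice segments along lines in direction $\mathbf e_i-\mathbf e_{i+1}$. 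The easy part is the M-convex exchange moves that stay inside one such line; the delicate part is the cross-fiber exchange moves, and here one must feed in the M-convexity of $\supp(f)$ (a genuinely global hypothesis) together with the staircase shape $\del_i$ creates between adjacent fibers. A more hands-on alternative, avoiding this lemma, is to argue inside the pipe dream model: Bergeron--Billey ladder moves connect all reduced pipe dreams of a fixed $w$, and tracking the effect of a sequence of ladder moves on $\wt(P)$ establishes the exchange axiom directly on $\supp(\mathfrak S_w)$; this trades polytope bookkeeping for combinatorial bookkeeping. Granting the closure lemma, the induction gives (i), and combined with (ii) the proof is complete.
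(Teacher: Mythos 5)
This theorem is not proved in the paper at all --- it is quoted as background from Fink--M\'esz\'aros--St.~Dizier \cite{fms18} --- so your argument has to stand on its own, and as written it does not. The reduction itself is fine (M-convexity of $\supp(\mathfrak S_w)$ together with $\Newton(\mathfrak S_w)=\sum_i P(\SM(D_i))$ does imply the statement, by Murota's characterization), and the polyhedral bookkeeping in ingredient (ii) --- additivity of the Schubitope bounds over columns, submodularity, and the identification of a one-column Schubitope with a Schubert matroid polytope --- is unobjectionable. The problem with (ii) is the starting point: the equality $\Newton(\mathfrak S_w)=\mathcal S_{D(w)}$ was \emph{conjectured} by Monical--Tokcan--Yong \cite{mty19} and first proved in \cite{fms18}, i.e.\ it is part of the very theorem you are trying to establish, so invoking it as known input is circular (the loose attribution in the present paper's introduction notwithstanding).

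The more serious gap is ingredient (i). The closure lemma you reduce to is false: take $f=2x_1^3+x_1^2x_2+2x_1x_2^2+x_2^3$, which has nonnegative coefficients and M-convex support $\{(3,0),(2,1),(1,2),(0,3)\}$, yet
\[
\del_1(f)=x_1^2+x_2^2,
\]
whose support $\{(2,0),(0,2)\}$ is not M-convex. So nonnegativity plus M-convexity of $\supp(f)$ cannot propagate through $\del_i$; the cross-fiber cancellations you flag as ``delicate'' genuinely destroy the exchange property, and any divided-difference induction would need finer structural input about Schubert polynomials than your hypotheses carry. Your fallback via Bergeron--Billey ladder moves is also only a sketch: connectivity of the set of reduced pipe dreams under ladder moves does not, by itself, produce for a given pair of weights $\wt(P),\wt(Q)$ and a given index the two exchanged weights demanded by the M-convex axiom. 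The published proof avoids any such $\del_i$-closure statement: it realizes $\mathfrak S_w$ as the dual character of a flagged Weyl module and shows directly that every lattice point of $\sum_i P(\SM(D_i))$ occurs in the support, which is exactly the step your outline leaves open.
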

%
%
%
%

\section{Construction of pipe dreams of given weight}

In this section we prove Theorem~\ref{thm:main}. The key tool is a pipe dream construction (Theorem~\ref{thm:main2-steps}) which can be iterated to produce pipe dreams of higher weight. We begin with several preparatory lemmas about the local structure of pipes within a pipe dream. 

\begin{lem}
\label{lem:exists-bump}
Suppose there exists a bump tile $T$ in row $r$ whose primary pipe $P^{(i)}$ is also the primary pipe of a real cross tile below $T$. Let $T'$ denote the highest such cross tile, say in row $r'$, and let $P^{(j)}$ denote the secondary pipe of $T'$.
\begin{enumerate}
\item The pipe $P^{(j)}$ is not the primary pipe of any crossing tile, real or fake, in rows $[r, r')$.

\item If $S$ is any tile in rows $[r,r')$ with primary pipe $P^{(j)}$ and secondary tile $P^{(k)}$, then $j>k$ and $P^{(k)}$ crosses $P^{(j)}$ before tile $T'$.
\end{enumerate}
\end{lem}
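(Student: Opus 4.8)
The plan is to analyze the pipe $P^{(j)}$ in the window of rows $[r, r')$, using the fact that $T'$ is the \emph{highest} real cross tile below $T$ whose primary pipe is $P^{(i)}$. First I would recall that $P^{(i)}$ is the primary pipe of both $T$ (a bump, in row $r$) and $T'$ (a real crossing, in row $r'$), so between these tiles the pipe $P^{(i)}$ travels from the bottom edge of $T$ down to the top edge of $T'$ staying in column $i$ until it reaches row $r'$; meanwhile $P^{(j)}$, the secondary pipe of $T'$, must enter $T'$ from the right and exit from the bottom, so just above $T'$ the pipe $P^{(j)}$ lies in a column strictly to the right of column $i$, i.e.\ $j > i$. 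That last inequality, together with the ordering of pipes, will anchor everything.

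\textbf{Part (1).} Suppose for contradiction that $P^{(j)}$ is the primary pipe of some crossing tile $S$ (real or fake) in rows $[r, r')$, and take $S$ to be the highest such. Its secondary pipe $P^{(k)}$ satisfies $k > j > i$. I would then trace: directly below $S$, pipe $P^{(j)}$ continues downward, and since $S$ is the highest crossing tile of $P^{(j)}$ in this window, $P^{(j)}$ stays in a fixed column $> i$ all the way down to row $r'$, where it enters $T'$ from the right. But to be the primary pipe of $S$ in row $< r'$, pipe $P^{(j)}$ must have been positioned so that it exits the bottom of $S$; combined with $j > i$ and the position of $P^{(i)}$ in column $i$ over this whole range, I would derive that $P^{(i)}$ and $P^{(j)}$ would have to be interleaved in a way forcing an earlier crossing of $P^{(i)}$ with $P^{(j)}$ — contradicting that $T'$ is the highest cross tile below $T$ with primary pipe $P^{(i)}$. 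The cleanest route is probably to argue about the \emph{column positions} of $P^{(i)}$ and $P^{(j)}$ immediately above row $r'$ versus immediately below row $r$, and invoke that two pipes cannot ``swap sides'' without crossing. The main obstacle is bookkeeping the distinction between real and fake crossings here: I expect to need the observation that even a fake cross tile of $P^{(j)}$ would change $P^{(j)}$'s column, which is enough to run the contradiction without caring about reducedness.

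\textbf{Part (2).} Let $S$ be a tile in rows $[r, r')$ with primary pipe $P^{(j)}$ and secondary pipe $P^{(k)}$; note $S$ is necessarily a bump tile by part (1). I want $j > k$ and that $P^{(k)}$ crosses $P^{(j)}$ before $T'$. For $j > k$: since $S$ is a bump whose primary pipe is $P^{(j)}$, the pipe $P^{(k)}$ enters $S$ from the right and exits downward, so immediately after $S$ the pipe $P^{(k)}$ is below $P^{(j)}$; but we established $P^{(j)}$ enters $T'$ from the right (so it is the ``upper/right'' pipe there), and by part (1) $P^{(j)}$ does not cross anything as its primary pipe in this window — so its relative position against $P^{(k)}$ can only be changed by $P^{(k)}$ crossing it. Tracking $P^{(j)}$ and $P^{(k)}$ from $S$ down to $T'$, since after $S$ we have $P^{(k)}$ below $P^{(j)}$ and the standard left-to-right/top-to-bottom reading gives $k$ appearing before $j$ in $w(r)\,w(r+1)\,\cdots$, I would conclude $j > k$ exactly as in the proof of Lemma~\ref{lem:good-pipes} (apply that lemma, or its argument, to tile $S$). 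Finally, $P^{(k)}$ crosses $P^{(j)}$ before $T'$: the secondary pipe of $T'$ is $P^{(j)}$, so the pipe exiting the \emph{left} of $T'$ (call it the secondary pipe there) is $P^{(j)}$; since $P^{(k)}$ passes through $S$ strictly above $T'$ on the right side of $P^{(j)}$ and ends up below it, the crossing of $P^{(k)}$ past $P^{(j)}$ must occur weakly between $S$ and $T'$, hence before $T'$ in the ``after'' ordering. Here the subtle point — and the step I'd be most careful about — is ruling out that $P^{(k)} = P^{(i)}$ or some other coincidence that would let the ``crossing'' be $T'$ itself; this is handled because $P^{(i)}$ is the primary pipe of $T'$ while we are talking about pipes that are secondary at $S$, so they are genuinely distinct from the $T'$-crossing and the crossing with $P^{(j)}$ is a separate, earlier tile.
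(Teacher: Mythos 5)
Your overall strategy for part (1) does not go through. You propose to derive the contradiction from the relative column positions of $P^{(i)}$ and $P^{(j)}$ alone (``two pipes cannot swap sides without crossing''), but there is no contradiction at that level: it is perfectly consistent for $P^{(j)}$ to be the primary pipe of a cross tile in rows $[r,r')$ while sitting strictly to the right of $P^{(i)}$, and then to travel down and left to enter $T'$ from the right --- pipes only move down and left, and nothing in this two-pipe picture forces an earlier crossing of $P^{(i)}$ with $P^{(j)}$. (Your intermediate claim that $P^{(j)}$ ``stays in a fixed column all the way down to row $r'$'' is also false: below $S$ the pipe $P^{(j)}$ can turn left at a bump or as the secondary pipe of further crossings.) The actual obstruction lives in the third pipe $P^{(k)}$, which you introduce but never use: after exiting the left edge of the hypothetical cross tile, $P^{(k)}$ stays weakly above $P^{(j)}$ and is trapped in the region bounded by $P^{(i)}$ and $P^{(j)}$ above $T'$; to reach the left edge of the grid it must really cross $P^{(i)}$, necessarily with $P^{(i)}$ as the primary pipe, at a tile in rows $[r,r')$ --- and \emph{that} contradicts the minimality of $r'$. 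This is the paper's argument, and it cannot be replaced by bookkeeping the columns of $P^{(i)}$ and $P^{(j)}$.

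Part (2) has two problems. First, the orientation of the bump tile is backwards: the primary pipe is by definition the one exiting from the \emph{bottom} edge, so since $P^{(j)}$ is primary at $S$, it is $P^{(j)}$ that enters from the right and exits downward, while $P^{(k)}$ enters from the top and exits from the left; hence $P^{(k)}$ is \emph{above} $P^{(j)}$ after $S$, not below. Your derivations of $j>k$ and of the forced crossing both rest on the reversed picture (and on applying Lemma~\ref{lem:good-pipes}, which concerns cross tiles, to the bump $S$). Second, and more importantly, you never invoke the minimality of $r'$ in this part. The paper's proof is: by minimality of $r'$, the pipe $P^{(k)}$ cannot cross $P^{(i)}$ in rows $[r,r')$ (the same trapping argument as in part (1)), so to escape the region it must cross $P^{(j)}$ as the primary pipe of a cross tile before $T'$; since that crossing occurs after $S$, the two pipes have not crossed at $S$, which is exactly what makes the primary label at the bump $S$ the larger one, i.e.\ $j>k$. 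Without this input, nothing rules out $P^{(k)}$ escaping across $P^{(i)}$ instead, nor that $P^{(j)}$ and $P^{(k)}$ had already crossed above $S$ (which would give $j<k$).
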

See Figure~\ref{fig:exists-bump} for cartoons depicting parts (1) and (2) of Lemma~\ref{lem:exists-bump}.
\begin{figure}[ht]
\begin{center}
\includegraphics{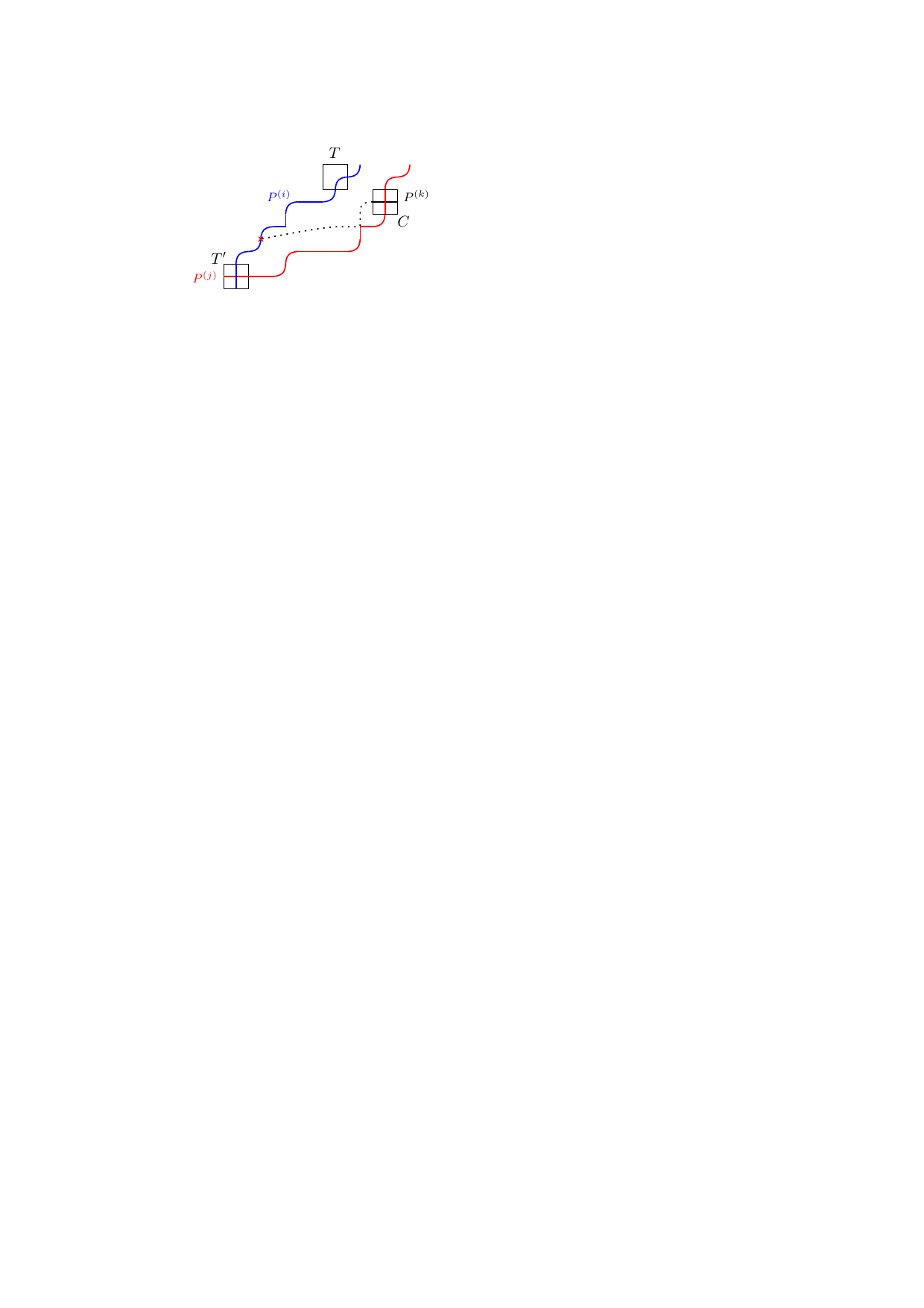}\qquad\qquad \includegraphics{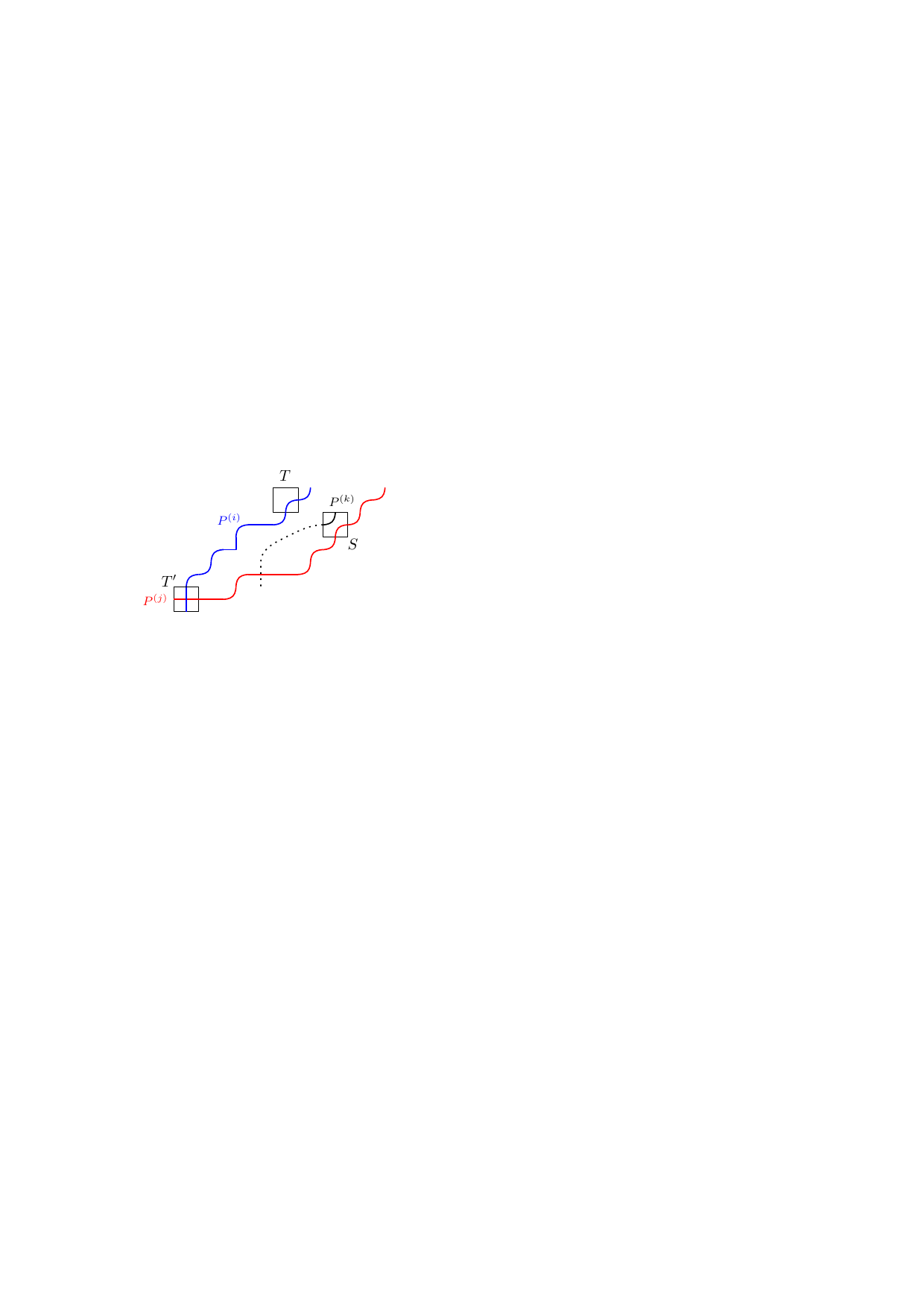}
\end{center}
\caption{Left, Lemma~\ref{lem:exists-bump}(1): $P^{(k)}$ cannot cross $P^{(i)}$ or $P^{(j)}$; it has nowhere to go.\\ Right, Lemm~\ref{lem:exists-bump}(2): $P^{(k)}$ cannot cross $P^{(i)}$, so it must cross $P^{(j)}$ before $T'$.}
\label{fig:exists-bump}
\end{figure}
\begin{proof}
Our argument loosely follows Figure~\ref{fig:exists-bump}.

\textbf{Part (1).} Suppose that $P^{(j)}$ is the primary pipe to a cross tile $C$ in row $s \in [r,r')$, and write $P^{(k)}$ for the secondary pipe of $C$. (Pipes $P^{(i)}$ and $P^{(k)}$ are distinct as $P^{(i)}$ first crosses $P^{(j)}$ at row $r'$, whereas $P^{(k)}$ first crosses $P^{(j)}$ at row $s < r'$.) As $P^{(j)}$ and $P^{(k)}$ cross at $C$, the pipe $P^{(k)}$ must stay weakly above $P^{(j)}$ after tile $C$. As $P^{(i)}$ is weakly below $P^{(j)}$ after $T'$, pipes $P^{(i)}$ and $P^{(k)}$ must cross after $C$ but before $T'$, contradicting minimality of $r'$.


\textbf{Part (2).} Let $S$ be a (necessarily bump) tile in rows $[r,r')$ with primary pipe $P^{(j)}$ and secondary pipe $P^{(k)}$. By the minimality assumption on $r'$, the pipe $P^{(k)}$ cannot cross $P^{(i)}$ in rows $[r,r')$, so $P^{(k)}$ must cross $P^{(j)}$ as the primary pipe of some cross tile that is before $T'$.
\end{proof}

\begin{lem}
\label{lem:three-in-a-tile}
Suppose there exists a bump tile $T$ in row $r$ whose primary pipe $P^{(i)}$ and secondary pipe $P^{(j)}$ do not cross. Let $P^{(k)}$ be a pipe which exits from the left between $P^{(i)}$ and $P^{(j)}$. Then $P^{(k)}$ crosses $P^{(i)}$ in some row $s > r$, or $P^{(k)}$ crosses $P^{(j)}$ in some row $s \geq r$.
\end{lem}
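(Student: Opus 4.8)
The plan is to run a Jordan-curve (planarity) argument, building on the observation used in the proof of Lemma~\ref{lem:good-pipes} that after a crossing or bump the primary pipe stays weakly below the secondary pipe.

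\emph{Setting up the region.} Say $T$ sits in position $(r,c)$. Since $P^{(i)}$ and $P^{(j)}$ never cross, and $P^{(i)}$ leaves $T$ through its south edge while $P^{(j)}$ leaves through its west edge, the pipe $P^{(i)}$ stays weakly below $P^{(j)}$ everywhere after $T$, so $w(i)>w(j)$; thus the hypothesis on $P^{(k)}$ reads $w(j)<w(k)<w(i)$, and in particular $P^{(k)}$ differs from both $P^{(i)}$ and $P^{(j)}$. Now form the closed curve $C$ consisting of (a) the portion of $P^{(i)}$ after $T$, (b) the portion of $P^{(j)}$ after $T$, (c) the short boundary arc of $T$ running from the midpoint of its west edge to the midpoint of its south edge through the southwest corner, and (d) the segment of the left edge of the grid between rows $w(j)$ and $w(i)$. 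Since $P^{(i)}$ and $P^{(j)}$ do not cross after $T$, the curve $C$ is simple; let $R$ denote the closed disk it bounds. The tiles traversed by arc (a) lie in rows $>r$ and those traversed by arc (b) lie in rows $\geq r$, and one checks that every tile meeting $R$ lies after $T$.

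\emph{Trapping $P^{(k)}$.} The pipe $P^{(k)}$ leaves the grid at row $w(k)$, which lies in the relative interior of side (d) of $C$, so $P^{(k)}$ enters $R$; but $P^{(k)}$ enters the grid along the top edge of column $k$, which does not lie in $R$. Hence the trajectory of $P^{(k)}$ crosses $C$. It cannot cross side (d), which is part of the boundary of the grid and is met by $P^{(k)}$ only at its single exit point, and it cannot cross side (c), which lies inside the bump tile $T$ whose only two pipes are $P^{(i)}$ and $P^{(j)}$. Moreover, meeting $C$ at a bump tile or a fake crossing would leave $P^{(k)}$ on the same side of $C$, so in fact $P^{(k)}$ has a real crossing with $P^{(i)}$ along arc (a) or with $P^{(j)}$ along arc (b). In the former case the crossing occurs in a row $s>r$; in the latter, in a row $s\geq r$. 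This is exactly what the lemma claims.

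\emph{Anticipated obstacle.} The topology is routine; the real care goes into the degenerate configurations, for instance $r=1$, or $c=1$ (so that $P^{(j)}$ exits the grid immediately through the west edge of $T$), or $w(j)=r$ together with $w(i)=r+1$. In each such case one either checks directly that the starting point of $P^{(k)}$ still fails to lie in $R$, or observes that no eligible $P^{(k)}$ exists so that the statement is vacuous. I expect the cleanest writeup to describe $C$ and $R$ combinatorially, in terms of tile edges rather than as literal planar curves, so that all of these boundary cases are dispatched uniformly; tangencies between $P^{(i)}$ and $P^{(j)}$ at a shared bump or fake crossing after $T$, which would pinch $C$, can be absorbed in the same way.
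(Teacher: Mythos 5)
Your argument is correct and is essentially the paper's own proof: the authors likewise observe that if $P^{(k)}$ crossed neither pipe in the stated rows it would be trapped strictly between $P^{(i)}$ and $P^{(j)}$ in rows $\geq r$, which is impossible since those two pipes are adjacent at the bump tile $T$. Your Jordan-curve formalization just makes explicit the planarity fact the paper leaves to a figure.
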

\begin{figure}[ht]
\begin{center}
\includegraphics{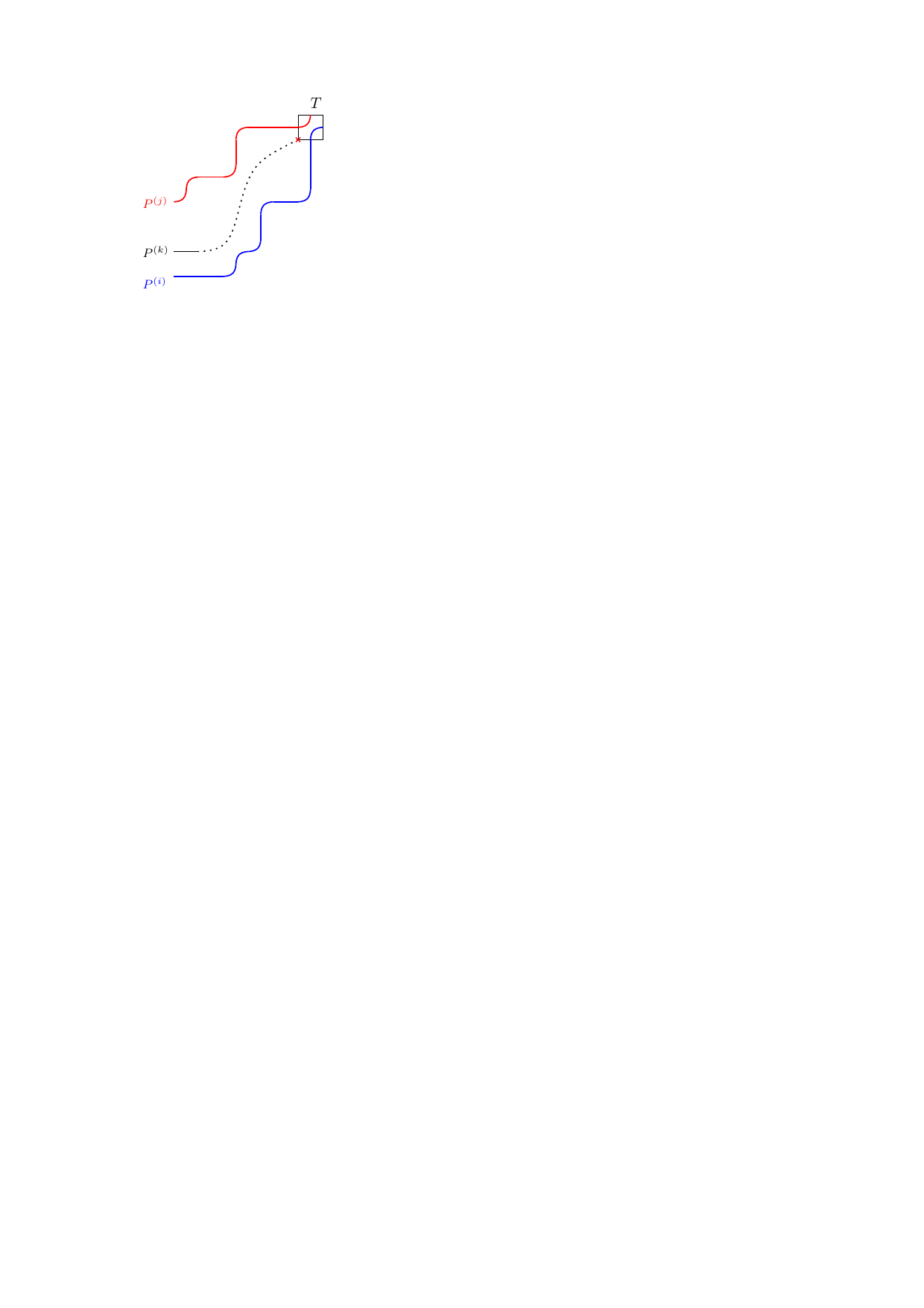}
\end{center}
\caption{$P^{(k)}$ cannot be below $P^{(j)}$ and above $P^{(i)}$ in rows $\geq r$, because $P^{(i)}$ and $P^{(j)}$ are too close together at $T$.}
\label{fig:three-in-a-tile}
\end{figure}

\begin{proof}
If $P^{(k)}$ does not cross $P^{(i)}$ in some row $s > r$, then $P^{(k)}$ is below $P^{(i)}$ in rows $\geq r$. Similarly, if $P^{(k)}$ does not cross $P^{(j)}$ in some row $s \geq r$, then $P^{(k)}$ is weakly above $P^{(j)}$ in rows $\geq r$. These two conditions on $P^{(k)}$ are contradictory to the existence of a bump tile $T$ in row $r$ involving $P^{(i)}$ and $P^{(j)}$. See Figure~\ref{fig:three-in-a-tile}.
\end{proof}

\begin{lem}[cf.\ {\cite[Lem 5.2]{cy25}}]
\label{lem:case-2-propagates}
Suppose that $P^{(i)}$ and $P^{(j)}$ are the two pipes involved tiles $T$ and $T'$ with $P^{(i)}$ always above $P^{(j)}$ between $T$ and $T'$ and let denote the region $R$ enclosed by $P^{(i)}$ and $P^{(j)}$ between $T$ and $T'$. Then any pipe $P^{(k)}$ $(k \notin \{i,j\})$ appearing in $R$ must cross both $P^{(i)}$ and $P^{(j)}$ between $T$ and $T'$.
    
In this case, $P^{(k)}$ is the primary pipe of its real crossing between $P^{(i)}$ if and only if $P^{(k)}$ is the primary pipe of its real crossing with $P^{(j)}$.
\end{lem}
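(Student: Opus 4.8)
The plan is to prove the two assertions in turn, using two facts about pipe routing: at a real crossing the two pipes reverse their relative vertical order (the pipe entering from the top exits from the bottom and so becomes the lower one), while at a bump tile or fake crossing they do not; and any two pipes of $P$ cross at most once, since subsequent meetings of two pipes that have already crossed are bounced rather than crossed.

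I would first show that any $P^{(k)}$ with $k\notin\{i,j\}$ appearing in $R$ crosses each of $P^{(i)}$ and $P^{(j)}$ exactly once between $T$ and $T'$, and that both crossings are real. The region $R$ is a bounded union of tiles whose boundary is made up of an arc of $P^{(i)}$, an arc of $P^{(j)}$, and the two corner tiles $T$ and $T'$. Since $T$ and $T'$ already contain the two pipes $P^{(i)}$ and $P^{(j)}$, no third pipe passes through them, so $P^{(k)}$ can enter or leave $R$ only by crossing $P^{(i)}$ or $P^{(j)}$. If $P^{(k)}$ both entered and left $R$ through the same pipe, say $P^{(i)}$, it would cross $P^{(i)}$ once passing from above $P^{(i)}$ into $R$ and once passing from $R$ back to above $P^{(i)}$; each of these reverses the vertical order of $P^{(k)}$ and $P^{(i)}$ and so is a real crossing, contradicting that two pipes cross at most once. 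The same argument forbids entering and leaving through $P^{(j)}$; and once $P^{(k)}$ has left $R$ it lies on the far side of whichever of $P^{(i)}, P^{(j)}$ it exited through (above $P^{(i)}$, or below $P^{(j)}$), so re-entering $R$ would again demand a second real crossing with that pipe. Hence $P^{(k)}$ enters $R$ exactly once and leaves exactly once, crossing one of $P^{(i)}, P^{(j)}$ to enter and the other to leave, and both of these are real crossings.

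For the second assertion, I would split on which of $P^{(i)}, P^{(j)}$ the pipe $P^{(k)}$ uses to enter $R$. If $P^{(k)}$ enters across $P^{(i)}$, then at that crossing tile $P^{(k)}$ passes from above $P^{(i)}$ to below it, so $P^{(k)}$ exits the bottom edge and is therefore the primary pipe there; when $P^{(k)}$ later leaves $R$ across $P^{(j)}$ it likewise passes to the lower side and is primary again. If instead $P^{(k)}$ enters across $P^{(j)}$, then it passes to the upper side at that crossing, so it exits the left edge and is secondary, and it is secondary in the same way when it leaves across $P^{(i)}$. In either case $P^{(k)}$ is primary at its crossing with $P^{(i)}$ if and only if it is primary at its crossing with $P^{(j)}$ (indeed it is primary at both, or secondary at both), which is exactly the claim.

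The step I expect to be the main obstacle is the first paragraph: making precise what ``the region $R$ enclosed by $P^{(i)}$ and $P^{(j)}$'' means as a union of tiles, and checking that its boundary really is exhausted by the two pipe-arcs and the corner tiles $T$ and $T'$, so that the ``enters or leaves $R$ only by crossing $P^{(i)}$ or $P^{(j)}$'' step is legitimate; one also has to track the staircase geometry carefully enough that ``above''/``below'' is unambiguous at each relevant tile. Once the boundary description and the swap/no-swap dichotomy at crossings are set up, the case analysis proving the second assertion is immediate.
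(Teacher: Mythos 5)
Your argument is correct and follows essentially the same route as the paper's proof: a boundary argument showing that a pipe appearing in $R$ must enter through one of $P^{(i)},P^{(j)}$ and exit through the other, followed by tracking whether $P^{(k)}$ passes from above to below (primary at both crossings) or from below to above (secondary at both). You are in fact slightly more careful than the paper in ruling out entry and exit through the same bounding pipe (via the at-most-one-real-crossing fact) and through the corner tiles $T,T'$ themselves.
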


\begin{figure}[ht]
\begin{center}\includegraphics{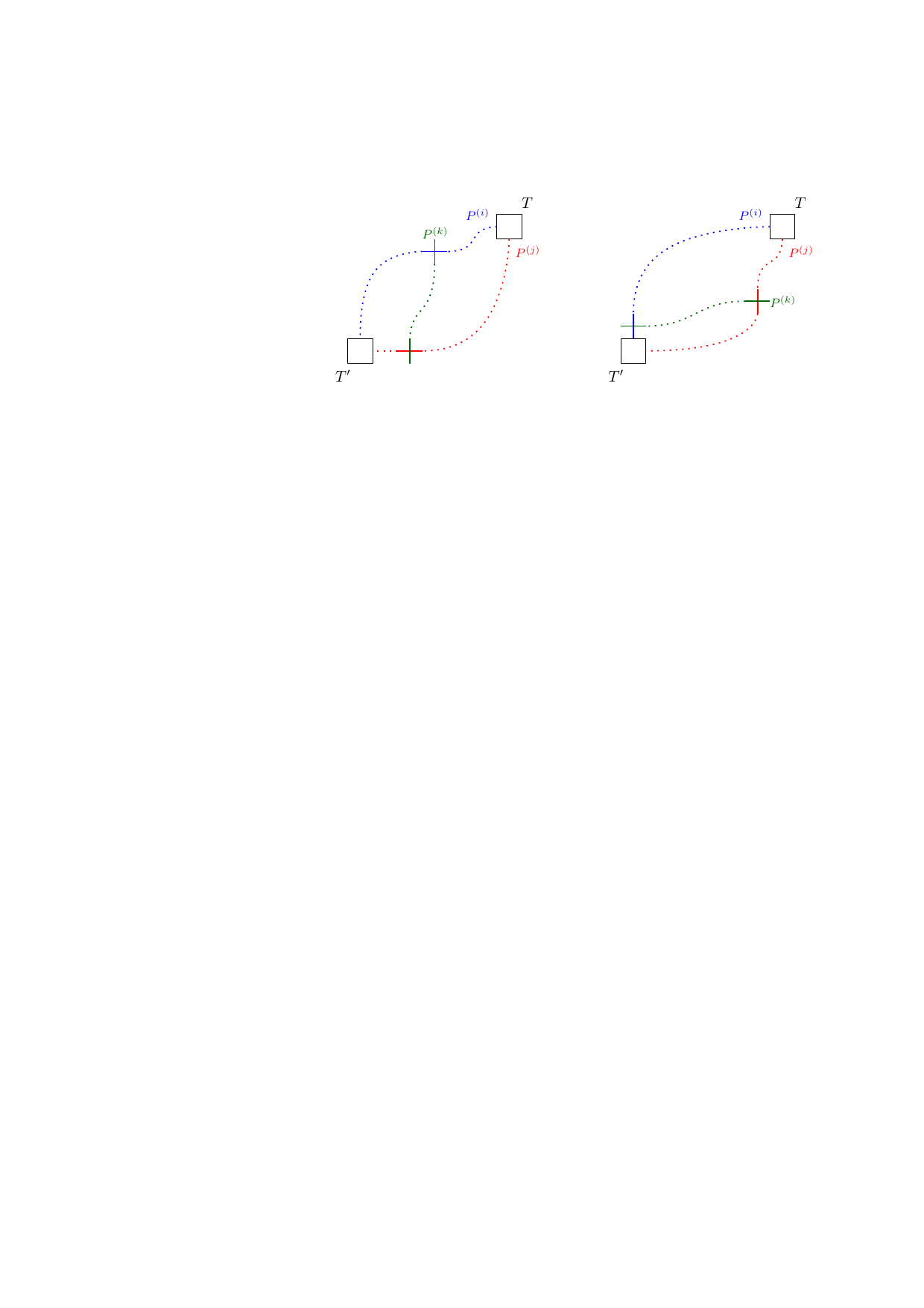}\end{center}
\caption{Cartoon for Lemma~\ref{lem:case-2-propagates}.}
\label{fig:case-2-propagates}
\end{figure}

\begin{proof}
Any pipe $P^{(k)}$ entering or exiting $R$ must cross pipe $P^{(i)}$ or $P^{(j)}$; thus if $P^{(k)}$ crosses one of $P^{(i)}$ or $P^{(j)}$ between $T$ and $T'$ it must cross the other.

Furthermore, if $P^{(k)}$ crosses either $P^{(i)}$ or $P^{(j)}$ as the primary pipe as it enters $R$, then $P^{(k)}$ is above both pipes $P^{(i)}$ and $P^{(j)}$ before entering $R$; hence it must be below both pipes $P^{(i)}$ and $P^{(j)}$ after exiting $R$ and must cross both $P^{(i)}$ and $P^{(j)}$ as the primary pipe.
\end{proof}

\newtheorem*{thm:main2-steps}{Theorem~\ref{thm:main2-steps}}
\begin{thm:main2-steps}
Let $w\in S_n$ be a fireworks permutation and let $P \in \PD(w)$ be a pipe dream with $\wt(P) < \wt(\overline{D(w)})$. For any $a$ with $\wt(P)_a < \wt(\overline{D(w)})_a$, there exists a pipe dream $Q \in \PD(w)$ satisfying:
\begin{itemize}
\item $\wt(Q)_b = \wt(P)_b$ for $b<a$
\item $\wt(Q)_a = \wt(P)_a + 1$
\item $\wt(Q)_b \leq \wt(P)_b$ for $b>a$.
\end{itemize}
\end{thm:main2-steps}

\begin{proof}[Proof of Theorem~\ref{thm:main2-steps}]
As $\wt(P)_a < \wt(\overline{D(w)})_a$, there exists a primary pipe $P^{(i)}$ of a bump tile $T$ in row $a$ such that $i$ is not a left-to-right maximum in the string of numbers $w(1)\,w(2)\,\dots\,w(n)$. (Our goal is to replace $T$ with a cross tile without changing the associated permutation, possibly at the cost of replacing some cross tiles in rows $>a$ with bumps; cf.\ the introduction and Remark~\ref{rem:good-pipes-only}.)

Let $P^{(j)}$ denote the secondary pipe of $T$.

\textbf{Case 0.} Assume that $i<j$. In this case, replace $T$ with a cross tile to obtain a pipe dream $Q$. As $P^{(i)}$ is smaller than $P^{(j)}$, this new cross tile is a fake crossing, so $Q\in\PD(w)$ is the desired pipe dream.

\textbf{Case 1.} Assume that $i>j$ and $P^{(i)}$ is not the primary pipe of any real cross tile below row $a$. We first claim that $w^{-1}(j) > w^{-1}(i)$, i.e., that $P^{(i)}$ and $P^{(j)}$ cross in $P$ (necessarily below row $a$, with $P^{(j)}$ as the primary pipe): otherwise, if $P^{(i)}$ and $P^{(j)}$ do not cross, then $w^{-1}(j) < w^{-1}(i)$; the fireworks assumption on $w$, combined with the fact that $i$ is not a left-to-right maximum, implies that the number $w(w^{-1}(i) - 1)\equalscolon k$ immediately to the left of $i$ in the string $w(1)\,w(2)\,\dots\,w(n)$ is larger than $i$, so that both $w^{-1}(j) < w^{-1}(k)$ and $j < k$ hold, i.e., that the pipes $P^{(j)}$ and $P^{(k)}$ do not cross, and Lemma~\ref{lem:three-in-a-tile} implies that $P^{(i)}$ and $P^{(k)}$ cross below row $a$ with $P^{(i)}$ as the primary pipe, contradictory to assumption on $P^{(i)}$. Hence, $P^{(i)}$ and $P^{(j)}$ cross.

Let $T'$ denote the real crossing tile where $P^{(i)}$ and $P^{(j)}$ cross. Construct a pipe dream $P'$ by:
\begin{itemize}
\item Replacing $T$ with a cross tile,
\item Replacing all fake cross tiles involving $P^{(i)}$ or $P^{(j)}$ between tiles $T$ and $T'$ with bump tiles, and
\item Replacing $T'$ with a bump tile.
\end{itemize}
See Figure~\ref{fig:case-2}.

\begin{figure}[ht]
\begin{center}\includegraphics{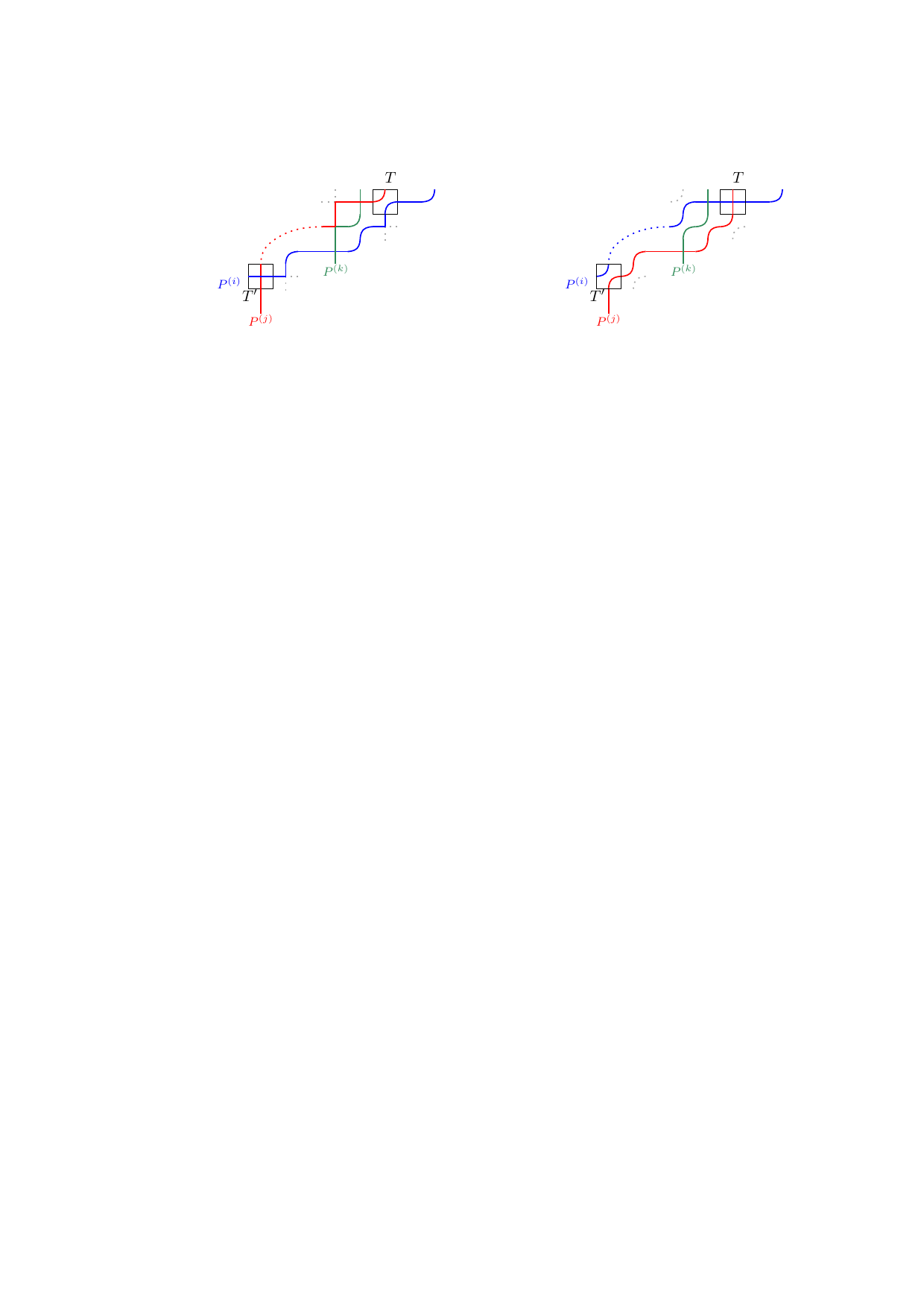}\end{center}
\caption{Construction for Case 1 of Theorem~\ref{thm:main2-steps}}
\label{fig:case-2}
\end{figure}
By Lemma~\ref{lem:case-2-propagates}, any pipe $P^{(k)}$ crossing either $P^{(i)}$ or $P^{(j)}$ between $T$ and $T'$ crosses both $P^{(i)}$ and $P^{(j)}$ between $T$ and $T'$; it follows that $P' \in \PD(w)$.

The weight of $P'$ satisfies:
\begin{itemize}
\item $\wt(P')_b = \wt(P)_b$ for $b < a$,
\item $\wt(P')_a \in \{\wt(P)_a, \wt(P)_a + 1\}$, depending on whether or not $P^{(j)}$ is the primary pipe of a fake crossing in row $a$, and
\item $\wt(P')_b \leq \wt(P)_b$.
\end{itemize}

If $\wt(P')_a = \wt(P)_a + 1$ then $Q = P'$ is the desired pipe dream. Otherwise, note that Lemma~\ref{lem:case-2-propagates} implies that the pipe $P^{(i)}$ of $P'$ is not the primary pipe of any real cross tile below row $a$. For as long as $P^{(i)}$ remains larger than the secondary pipe of the bump tile $T$ in row $a$ for which $P^{(i)}$ is primary, repeatedly apply the above construction; if $P^{(i)}$ is ever smaller than the secondary pipe of $T$, then the assumption of Case 0 holds and replacing $T$ with a fake cross tile gives the desired pipe dream $Q \in \PD(w)$. This procedure terminates after finitely many steps because the construction in Case 1 moves the tile $T$ strictly to the left.

\textbf{Case 2.} Assume that $P^{(i)}$ is the primary pipe of a real cross tile below row $a$.

Let $T'$ denote the highest such cross tile, say in row $a'$, and let $P^{(\ell)}$ denote the secondary pipe of $T'$. The pipe $P^{(\ell)}$ is primary for some tile $S$ in row $a$. Part (1) of Lemma~\ref{lem:exists-bump} implies that $S$ is a bump tile, and part (2) of Lemma~\ref{lem:exists-bump} implies that the secondary pipe $P^{(m)}$ of $S$ crosses $P^{(\ell)}$ at some tile $S'$ before $T'$. (See Figure~\ref{fig:case-1}, left.)

Construct a pipe dream $P'$ by:
\begin{itemize}
\item Replacing $S$ with a cross tile,
\item Replacing all fake cross tiles involving $P^{(\ell)}$ or $P^{(\ell')}$ between tiles $S$ and $S'$ with bump tiles, and
\item Replacing $S'$ with a bump tile.
\end{itemize}
See Figure~\ref{fig:case-1}, right.
\begin{figure}[ht]
\begin{center}
\includegraphics{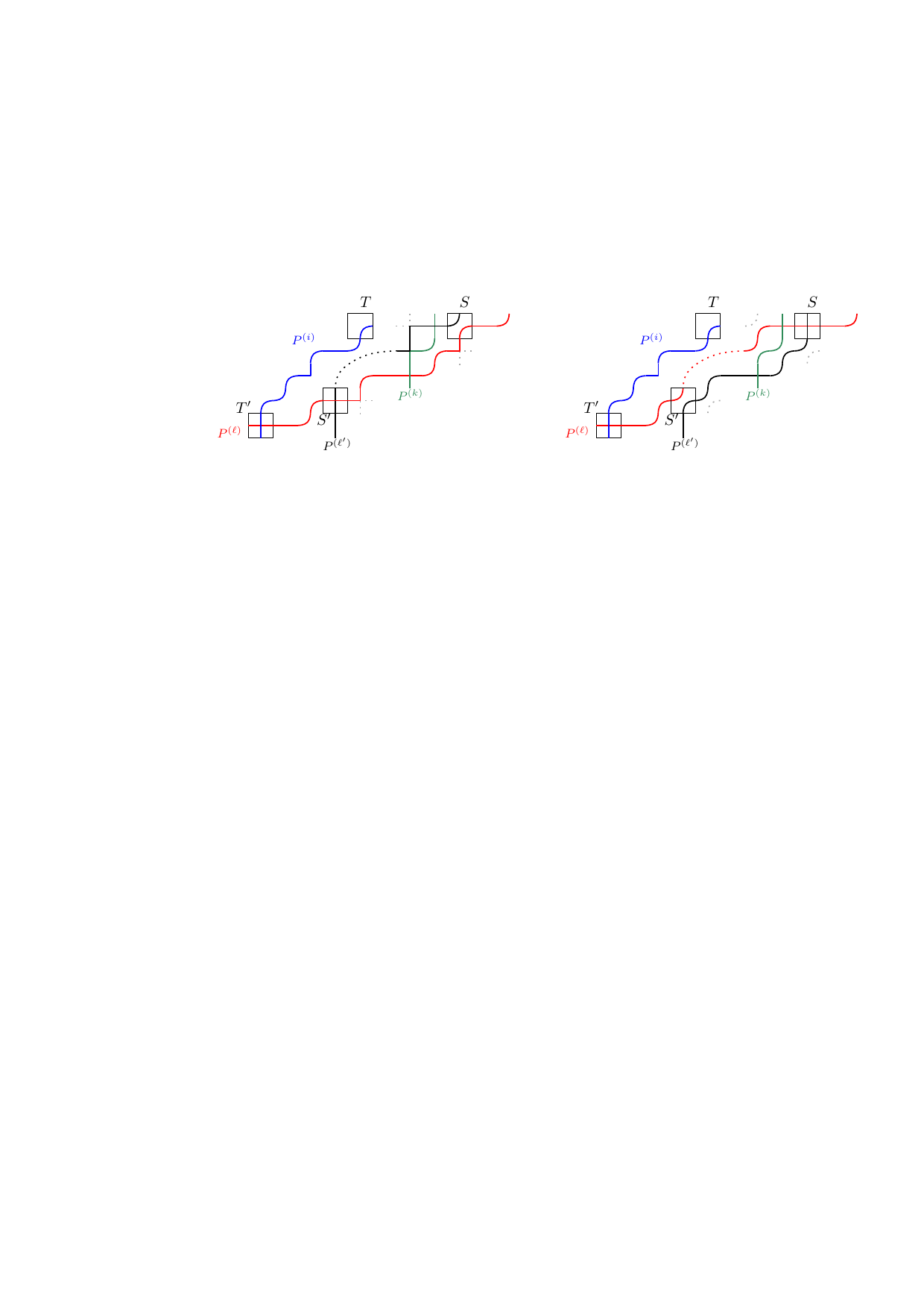}
\end{center}
\caption{Construction for Case 2 of Theorem~\ref{thm:main2-steps}}
\label{fig:case-1}
\end{figure}

By Lemma~\ref{lem:case-2-propagates}, any pipe $P^{(k)}$ crossing either $P^{(\ell)}$ or $P^{(\ell')}$ between $S$ and $S'$ crosses both $P^{(\ell)}$ and $P^{(\ell')}$ between $S$ and $S'$; it follows that $P' \in \PD(w)$.
\begin{itemize}
\item $\wt(P')_b = \wt(P)_b$ for $b < a$,
\item $\wt(P')_a \in\{\wt(P)_a, \wt(P)_a+1\}$, depending on whether or not $P^{(\ell')}$ is the primary pipe of a fake crossing in row $a$, and
\item $\wt(P')_b \leq \wt(P)_b$.
\end{itemize}
If $\wt(P')_a = \wt(P)_a + 1$ then $Q = P'$ is the desired pipe dream. Otherwise, Lemma~\ref{lem:case-2-propagates} implies that the pipe $P^{(i)}$ of $P'$ is again the primary pipe of a real cross tile below row $a$. Now repeatedly apply the above construction: after each step, the tile in row $a$ where $P^{(\ell)}$ is primary moves strictly closer to $T$; if the tile of $P$ in row $a$ where $P^{(\ell)}$ is primary is adjacent to $T$ then $P^{(\ell')} = P^{(i)}$, in which case $P^{(\ell')}$ is not the primary pipe of any fake crossing in row $a$, so that $\wt(P')_a = \wt(P) + 1$, and $P'$ is the desired pipe dream.
\end{proof}
\begin{rem}
Theorem~\ref{thm:main2-steps} gives a new proof of the fact \cite[Thm 3.15]{mss25} that $\mathbf x^{\wt(\overline{D(w)})}$ appears in the Grothendieck polynomial $\mathfrak G_w$ of a fireworks permutation.
\end{rem}
\begin{rem}
Our construction in the proof of Theorem~\ref{thm:main2-steps} yields pipe dreams $Q$ which are equal to $P$ in rows $<a$.
\end{rem}
\newtheorem*{thm:main}{Theorem~\ref{thm:main}}
\begin{thm:main}
Let $w$ be a fireworks permutation. If $\mathbf x^\alpha$ appears in $\mathfrak G_w$ and $x_i\cdot \mathbf x^\alpha$ divides $\mathbf x^{\wt(\overline{D(w)})}$, then $x_i\cdot \mathbf x^\alpha$ appears in $\mathfrak G_w$. In particular,
\[
\supp(\mathfrak G_w) = \bigcup_{\alpha\in\supp(\mathfrak S_w)}[\alpha,\wt(\overline{D(w)})].
\]
\end{thm:main}
\begin{proof}[Proof of Theorem~\ref{thm:main}]
Suppose that $\mathbf x^\alpha$ appears in $\mathfrak G_w$ and that $x_i\cdot \mathbf x^\alpha$ divides $\mathbf x^{\wt(\overline{D(w)})}$. By Theorem~\ref{thm:pd-generates}, there is a pipe dream $P\in\PD(w)$ with $\wt(P) = \alpha$. By repeated application of Theorem~\ref{thm:main2-steps}, there exists a pipe dream $Q\in\PD(w)$ with $\mathbf x^{\wt(Q)} = x_i\cdot\mathbf x^\alpha$; Theorem~\ref{thm:pd-generates} guarantees that $x_i\cdot \mathbf x^\alpha$ appears in $\mathfrak G_w$. The claim follows.
\end{proof}
\section{M-convexity of $\supp(\mathfrak G_w)$}
In this section, we prove Corollary~\ref{cor:m-convex} by observing that the support of $\mathfrak G_w$ can be decomposed as a Minkowski sum of smaller M-convex sets (Proposition~\ref{prop:psp-integer-points}).
\begin{defn}
Given a subset $S\subseteq[n]$ consisting of elements $s_1 < \dots < s_r$, write $\mathcal B_{\mathrm{sp}}$ for the {\color{darkred}\emph{spanning sets}} of the Schubert matroid; it is the collection of sets $B'\subseteq[s_r]$ consisting of elements $b_1 < \dots < b_{r'}$, with $r' \geq r$, satisfying $b_i \leq s_i$ for all $i\in[r]$.
\end{defn}
A result of Fujishige \cite{fujishige84} implies that the homogenizations $\widetilde\zeta_{B'}\colonequals (\zeta_{B'}, s_r - |B'|)\in\ZZ^{n+1}$ form an M-convex set.
\begin{defn}[cf.\ Definition~\ref{defn:schubert-matroid-polytope}]
\label{defn:schubert-spanning-set-polytope}
The convex hull
\[
\widetilde P_{\mathrm{sp}}(\mathrm{SM}(S))\colonequals\{\widetilde\zeta_{B'}\colon B'\in\mathcal B_{\mathrm{sp}}\}.
\]
is the {\color{darkred}\emph{homogenized Schubert spanning set polytope}}. See \cite[Prop 5.6]{hmss24} for a description of the rank function.

The projection $\varphi\colon \RR^{n+1}\to\RR^n$ forgetting the last coordinate yields an integral equivalence between $\widetilde P_{\mathrm{sp}}(\mathrm{SM}(S))$ and its image $P_{\mathrm{sp}}(\mathrm{SM}(S))$, called the {\color{darkred}\emph{Schubert spanning set polytope}}. (The set $P_{\mathrm{sp}}(\mathrm{SM}(S))\cap\ZZ^n$ of integer points is \emph{M${}^\natural$-convex}; see \cite[pg 18]{bh20} or \cite[\S 4.7]{murota03}.)
\end{defn}

\begin{lem}
\label{lem:homogenization-trick}
For subsets $D_1, \dots, D_m \subseteq[n]$, the Minkowski sum
\[
\sum_{j=1}^m \widetilde P_{\mathrm{sp}}(\SM(D_j))
\]
is a generalized permutahedron; in particular,
\[
\left(\sum_{j=1}^mP_{\mathrm{sp}}(\SM(D_j))\right) \cap \ZZ^n = \sum_{j=1}^m \left(P_{\mathrm{sp}}(\SM(D_j)) \cap \ZZ^n\right).
\]
\end{lem}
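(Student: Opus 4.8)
The plan is to prove Lemma~\ref{lem:homogenization-trick} by reducing it to Lemma~\ref{lem:gp-minkowski} via the homogenization gadget. First I would recall that each $\widetilde P_{\mathrm{sp}}(\SM(D_j))$ lives in $\RR^{n+1}$ and, by the result of Fujishige cited just above Definition~\ref{defn:schubert-spanning-set-polytope}, is an integral generalized permutahedron: its vertex set $\{\widetilde\zeta_{B'}\colon B'\in\mathcal B_{\mathrm{sp}}\}$ is M-convex, and M-convex sets are precisely the integer points of integral generalized permutahedra by \cite[Theorem 4.15]{murota03}. Hence by the first assertion of Lemma~\ref{lem:gp-minkowski}, the Minkowski sum $\sum_{j=1}^m \widetilde P_{\mathrm{sp}}(\SM(D_j)) \subseteq \RR^{n+1}$ is again a generalized permutahedron, and by the second assertion of Lemma~\ref{lem:gp-minkowski},
\[
\left(\sum_{j=1}^m \widetilde P_{\mathrm{sp}}(\SM(D_j))\right)\cap\ZZ^{n+1} = \sum_{j=1}^m\left(\widetilde P_{\mathrm{sp}}(\SM(D_j))\cap\ZZ^{n+1}\right).
\]

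Next I would transfer this identity down to $\RR^n$ using the projection $\varphi\colon\RR^{n+1}\to\RR^n$ forgetting the last coordinate. The key point is that on each polytope $\widetilde P_{\mathrm{sp}}(\SM(D_j))$, the last coordinate is determined by the others: indeed $\widetilde\zeta_{B'} = (\zeta_{B'}, \max(D_j) - |B'|)$ and $|B'| = \sum_i (\zeta_{B'})_i$, so the functional $\varphi$ is injective on $\widetilde P_{\mathrm{sp}}(\SM(D_j))$ and restricts to the integral equivalence onto $P_{\mathrm{sp}}(\SM(D_j))$ recorded in Definition~\ref{defn:schubert-spanning-set-polytope}. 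The same phenomenon persists on the Minkowski sum: a point of $\sum_j \widetilde P_{\mathrm{sp}}(\SM(D_j))$ has last coordinate $\sum_j \max(D_j)$ minus the sum of its first $n$ coordinates, so $\varphi$ is injective on the sum as well and carries it integrally and bijectively onto $\sum_j P_{\mathrm{sp}}(\SM(D_j))$, matching integer points with integer points. Applying $\varphi$ to the displayed equality above and using $\varphi\big(\widetilde P_{\mathrm{sp}}(\SM(D_j))\cap\ZZ^{n+1}\big) = P_{\mathrm{sp}}(\SM(D_j))\cap\ZZ^n$ then yields
\[
\left(\sum_{j=1}^m P_{\mathrm{sp}}(\SM(D_j))\right)\cap\ZZ^n = \sum_{j=1}^m\left(P_{\mathrm{sp}}(\SM(D_j))\cap\ZZ^n\right),
\]
which is the second claim of the lemma; the first claim (that $\sum_j \widetilde P_{\mathrm{sp}}(\SM(D_j))$ is a generalized permutahedron) was already established in the previous paragraph.

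The main obstacle is making the descent along $\varphi$ rigorous — specifically, checking that $\varphi$ really does match lattice points bijectively in the Minkowski sum (not just on each summand), which is why I emphasized that the last coordinate of any point of the Minkowski sum is an affine function of the first $n$. I expect this to be a short verification rather than a genuine difficulty, since $\varphi$ sends the integral lattice $\ZZ^{n+1}$ onto $\ZZ^n$ and the affine hull of each summand meets $\ZZ^{n+1}$ in a coset on which $\varphi$ is a lattice isomorphism onto $\ZZ^n$; the Minkowski sum lies in the sum of these affine hulls, where the same holds. An alternative, slicker route that avoids even this bookkeeping would be to cite the notion of \emph{M${}^\natural$-convexity} (the shadow of M-convexity under coordinate projection) directly: each $P_{\mathrm{sp}}(\SM(D_j))\cap\ZZ^n$ is M${}^\natural$-convex by Definition~\ref{defn:schubert-spanning-set-polytope}, and Minkowski sums of M${}^\natural$-convex sets are M${}^\natural$-convex with the integer-point-decomposition property \cite[\S4.7]{murota03}; I would present the homogenization argument as the main proof and mention this as a remark.
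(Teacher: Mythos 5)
Your proposal is correct and follows the same route as the paper: the paper's proof is exactly the one-line observation that each $\widetilde P_{\mathrm{sp}}(\SM(D_j))$ is a generalized permutahedron, so Lemma~\ref{lem:gp-minkowski} applies, and de-homogenization commutes with Minkowski sums. Your write-up simply fills in the details of that last step (that $\varphi$ matches lattice points bijectively because the last coordinate is an integral affine function of the first $n$), which the paper leaves implicit.
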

\begin{proof}
As $\widetilde P_{\mathrm{sp}}(D_j)$ is a generalized permutahedron, the claim follows from Lemma~\ref{lem:gp-minkowski}, combined with the fact that (de-)homogenization commutes with taking Minkowski sums.
\end{proof}
\begin{prop}[{cf.\ \cite[Thm 3.6]{mss25}}]
\label{prop:psp-integer-points}
Fix subsets $D_1, \dots, D_m \subseteq[n]$ and set $P\colonequals P(\SM(D_1)) + \dots + P(\SM(D_m))$. Then
\[
\left(\sum_{j=1}^nP_{\mathrm{sp}}(\SM(D_j))\right) \cap \ZZ^n = \bigcup_{\beta\in \supp(P)}[\beta,\wt(\overline D)].
\]
\end{prop}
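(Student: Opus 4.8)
The plan is to prove both inclusions directly, using the combinatorial descriptions of $\mathcal B$ and $\mathcal B_{\mathrm{sp}}$ together with the Minkowski-sum machinery of Lemma~\ref{lem:homogenization-trick}. First I would record the key single-column fact: for a subset $S = \{s_1 < \dots < s_r\} \subseteq [n]$, one has
\[
\big(P_{\mathrm{sp}}(\SM(S))\big) \cap \ZZ^n = \bigcup_{\beta \in \supp(P(\SM(S)))}[\beta, \mathbf e_{s_1} + \dots + \mathbf e_{s_r}],
\]
where the right-hand sum is exactly $\wt(\overline{S})$ since $\overline S$ (as a one-column diagram) is the column with boxes in rows $1, \dots, s_r$ — wait, more precisely the upward closure of the column $S$ has boxes in rows $1$ through $s_r$, so $\wt(\overline S)_i = 1$ for $i \le s_r$ and $0$ otherwise; meanwhile $\beta \in \supp(P(\SM(S)))$ means $\beta = \zeta_B$ for some basis $B = \{b_1 < \dots < b_r\}$ with $b_i \le s_i$. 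The containment $\zeta_{B'} \in [\zeta_B, \wt(\overline S)]$ for a spanning set $B' \supseteq B$ is immediate, and conversely any $0$-$1$ vector $\gamma$ with $\zeta_B \le \gamma \le \wt(\overline S)$ for some basis $B$ is the indicator of a spanning set (it contains a basis and sits inside $[s_r]$), so it lies in $P_{\mathrm{sp}}(\SM(S)) \cap \ZZ^n$. This handles $m = 1$.

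Next I would bootstrap to general $m$ by taking Minkowski sums. For the inclusion $\supseteq$: given $\beta \in \supp(P)$ and $\gamma \in [\beta, \wt(\overline D)]$, use Lemma~\ref{lem:gp-minkowski} to write $\beta = \beta^{(1)} + \dots + \beta^{(m)}$ with $\beta^{(j)} \in P(\SM(D_j)) \cap \ZZ^n = \supp(P(\SM(D_j)))$ — this uses that each $P(\SM(D_j))$ is a generalized permutahedron. Then I need to distribute $\gamma - \beta$ across the columns so that $\gamma = \gamma^{(1)} + \dots + \gamma^{(m)}$ with $\gamma^{(j)} \in [\beta^{(j)}, \wt(\overline{D_j})]$; since $\wt(\overline D) = \sum_j \wt(\overline{D_j})$ coordinatewise and each $\wt(\overline{D_j})$ is $0$-$1$, a greedy coordinate-by-coordinate allocation works (for each row $i$, the "excess" $\gamma_i - \beta_i$ is at most $\sum_j (\wt(\overline{D_j})_i - \beta^{(j)}_i)$, so it can be packed into the available slots). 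Each $\gamma^{(j)}$ then lies in $P_{\mathrm{sp}}(\SM(D_j)) \cap \ZZ^n$ by the $m=1$ case, and summing via Lemma~\ref{lem:homogenization-trick} places $\gamma$ in the Minkowski sum. For the inclusion $\subseteq$: given $\gamma$ in the left-hand side, Lemma~\ref{lem:homogenization-trick} writes $\gamma = \sum_j \gamma^{(j)}$ with $\gamma^{(j)} \in P_{\mathrm{sp}}(\SM(D_j)) \cap \ZZ^n$; by the $m=1$ case each $\gamma^{(j)} \in [\beta^{(j)}, \wt(\overline{D_j})]$ for some $\beta^{(j)} \in \supp(P(\SM(D_j)))$; then $\beta \colonequals \sum_j \beta^{(j)} \in \supp(P)$ (again by Lemma~\ref{lem:gp-minkowski}, since $P$ is a generalized permutahedron and the $\beta^{(j)}$ are integer points of the summands) and $\beta \le \gamma \le \wt(\overline D)$ follows by summing the inequalities.

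The main obstacle I anticipate is the combinatorial distribution step in the $\supseteq$ direction: once we commit to a decomposition $\beta = \sum \beta^{(j)}$ of the bottom vector, we are not free to re-choose it, so the argument that the target $\gamma$ can be split \emph{compatibly} across columns needs care. The clean way is to argue row by row: fix a row $i$; we have $\beta_i = \sum_j \beta^{(j)}_i$ (each summand $0$ or $1$), $\gamma_i \le \wt(\overline D)_i = \#\{j : i \le \max D_j\}$, and $\gamma_i \ge \beta_i$; set $\gamma^{(j)}_i = 1$ exactly for the $\gamma_i$ columns consisting of (first) all $j$ with $\beta^{(j)}_i = 1$ and then additional columns $j$ with $i \le \max D_j$ and $\beta^{(j)}_i = 0$, which exist in sufficient quantity. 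One must also double-check that the resulting $\gamma^{(j)}$, built independently in each row, is genuinely a spanning-set indicator for column $j$ — i.e., that it still contains the basis $\beta^{(j)}$ (true, since we only added boxes) and still lies in $[\max D_j]$ (true, since each added box is in a row $\le \max D_j$). A sanity check against \cite[Thm 3.6]{mss25} confirms the statement, so the proposition should go through without surprises.
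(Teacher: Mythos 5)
Your proposal is correct and follows essentially the same route as the paper: both directions decompose the relevant point across the columns via the Minkowski-sum integrality of generalized permutahedra, and the $\supseteq$ direction distributes the excess $\gamma-\beta$ row by row into columns $j$ with $\beta^{(j)}_i=0$ and $i\le\max D_j$, exactly as in the paper's choice of the sets $S_j\subseteq T_j$. Isolating the $m=1$ case first is a harmless reorganization of the same argument.
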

\begin{proof}
Take any 
\[
\alpha\in \left(\sum_{j=1}^nP_{\mathrm{sp}}(\SM(D_j))\right) \cap \ZZ^n
\]
and using Lemma~\ref{lem:homogenization-trick} write $\alpha = \alpha^{(1)} + \dots + \alpha^{(n)}$ for $\alpha^{(j)} \in P_{\mathrm{sp}}(\SM_{d_j}(D_j)) \cap \ZZ^n$. By construction, there exists $\beta^{(j)} \in P(\SM(D_j))$ so that $\beta^{(j)}\leq \alpha^{(j)} \leq \omega_{d_j}$, where $\omega_{d_j} = (1,\dots,1,0,\dots,0)$ is the $d_j$-th fundamental weight. The sum $\beta\colonequals \beta^{(1)} + \dots + \beta^{(n)}$ is in $\supp(P)$, and the sum $\omega_{d_1} + \dots + \omega_{d_n}$ is $\wt(\overline D)$.

It follows that
\[
\alpha^{(1)} + \dots + \alpha^{(n)} = \alpha \in [\beta,\wt(\overline D)].
\]
The converse is essentially proven in \cite[proof of Thm 3.6]{mss25}; we paraphrase the argument here for the reader's convenience. Take any
\[
\alpha\in\bigcup_{\beta\in\supp(P)}[\beta,\wt(\overline D)]
\]
and fix a decomposition $\beta = \beta^{(1)} + \dots + \beta^{(n)}$ with $\beta^{(i)}\in P(\SM(D_i))$. For any $j$, pick an arbitrary subset $S_j$ of 
\[
T_j \colonequals \{i \colon \beta^{(i)}_j = 0 \textup{ and } j \in \overline D_i\}
\]
of size $\alpha_j - \beta_j$; such a subset $S_j$ exists since $|T_j| = \wt(\overline D)_j - \beta_j \geq \alpha_j - \beta_j$. Then $\alpha^{(i)} \colonequals \beta^{(i)} + \mathbf 1_{\{i \in S_j\}}$ is in $P_{\mathrm{sp}}(\SM(D_i))$. By construction,
\[
\alpha = \alpha^{(1)} + \dots + \alpha^{(n)} \in \sum_{i=1}^n P_{\mathrm{sp}}(\SM(D_i)).
\]
\end{proof}

\newtheorem*{cor:m-convex}{Corollary~\ref{cor:m-convex}}
\begin{cor:m-convex}
When $w$ is fireworks, the homogenized support $\supp(\widetilde{\mathfrak G}_w)$ is M-convex, i.e., $\widetilde{\mathfrak G}_w$ has SNP and its Newton polytope is a generalized permutahedron.
\end{cor:m-convex}
\begin{proof}[Proof of Corollary~\ref{cor:m-convex}]
By Theorem~\ref{thm:main}, Theorem~\ref{thm:schub-support}, and Proposition~\ref{prop:psp-integer-points}, there are equalities
\[
\supp(\mathfrak G_w) = \bigcup_{\alpha\in\supp(\mathfrak S_w)}[\alpha,\wt(\overline{D(w)})] = \left(\sum_{j=1}^nP_{\mathrm{sp}}(\SM(D_j))\right) \cap \ZZ^n,
\]
where $D_j$ denotes the $j$-th column of $D(w)$.

Homogenizing the above equality gives
\[
\supp(\widetilde{\mathfrak G}_w) = \left(\sum_{j=1}^n\widetilde P_{\mathrm{sp}}(\SM(D_j))\right) \cap \ZZ^{n+1},
\]
so $\widetilde{\mathfrak G}_w$ has M-convex support by Lemma~\ref{lem:homogenization-trick}.
\end{proof}
\begin{rem}[{\cite[Thm 3.6]{mss25}}]
In general, Theorem~\ref{thm:schub-support} and Proposition~\ref{prop:psp-integer-points} together imply
\[
\supp(\mathfrak G_w)\subseteq \left(\sum_{j=1}^nP_{\mathrm{sp}}(\SM(D_j))\right)\cap\ZZ^n.
\]
In particular, the inclusion is strict when $\mathfrak G_w^{\mathrm{top}}$ is not a single monomial since $\deg(\mathfrak G_w) < |\overline{D(w)}|$. We expect that the inclusion is an equality otherwise (as in Equation~\eqref{eqn:master-support}).
\end{rem}
\section{Grothendieck polynomials with inclusion-maximal support}
In this section, we prove Corollary~\ref{cor:maximal-newton} using the support formula of Theorem~\ref{thm:main} to reduce the problem to a polytopal containment computation (Lemma~\ref{lem:psp-inclusion}).
\begin{defn}
Given integers $b_1, \dots, b_m$, the {\color{darkred}\emph{layered permutation}} with \emph{block sizes} $(b_1, \dots, b_m)$ is the longest element of the Young subgroup $S_{b_1}\times S_{b_2}\times\dots\times S_{b_m}$; i.e., it is the permutation with one-line notation
\[
c_1 \, (c_1 - 1)\,\dots \,1\, c_2\, (c_2 - 1)\,\dots\,(c_1 + 1)\, \dots\, c_m\, (c_m - 1)\,\dots\,(c_{m-1}+1),
\]
where $c_i \colonequals b_1 + \dots + b_i$.
\end{defn}
A permutation is layered if and only if it is $231$- and $312$-avoiding, and every layered permutation is fireworks.

\begin{lem}
\label{lem:l2r-max-empty}
The $w(i)$-th column $D(w)_{w(i)}$ of $D(w)$ is empty if and only if $w(i)$ is a left-to-right maximum of $w(1)\,w(2)\,\dots\,w(n)$.
\end{lem}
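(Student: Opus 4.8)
The plan is to unwind the definition of the Rothe diagram directly; no auxiliary constructions are needed. Recall that for $w \in S_n$ and a column index $j \in [n]$, the $j$-th column of $D(w)$ is
\[
D(w)_j = \{i \in [n] \colon i < w^{-1}(j) \text{ and } j < w(i)\}.
\]
First I would specialize this to the column labelled by the value $j = w(i)$, so that $w^{-1}(j) = i$, giving
\[
D(w)_{w(i)} = \{i' \in [n] \colon i' < i \text{ and } w(i) < w(i')\}.
\]

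The key observation is then purely a matter of reading off this set: it is empty precisely when there is no index $i' < i$ with $w(i') > w(i)$. By definition, $w(i)$ is a left-to-right maximum of the string $w(1)\, w(2)\, \dots\, w(n)$ exactly when $w(i') < w(i)$ for every $i' < i$ (there are no ties, since $w$ is a permutation). Hence $D(w)_{w(i)} = \emptyset$ if and only if $w(i)$ is a left-to-right maximum of $w(1)\,w(2)\,\dots\,w(n)$, which is the claim.

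I do not expect any genuine obstacle here: the statement is an immediate consequence of the definition of $D(w)$ once the column is indexed by the value $w(i)$ rather than by the position $i$. The only point requiring a moment's care is the bookkeeping between positions and values — remembering that the column label is $j = w(i)$ and that $w^{-1}(j) = i$ — after which the equivalence is tautological.
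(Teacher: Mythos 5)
Your proof is correct and takes essentially the same route as the paper: both arguments just unwind the definition of the Rothe diagram for the column indexed by the value $w(i)$ and observe that emptiness is equivalent to $w(i) > w(i')$ for all $i' < i$, i.e., to $w(i)$ being a left-to-right maximum.
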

\begin{proof}
The $w(i)$-th column of $D(w)$ is empty if and only if the first $i-1$ rows of $D(w)_{w(i)}$ are empty, or equivalently if $w(i) > w(j)$ for all $j \leq i-1$.
\end{proof}
\begin{lem}
\label{lem:fireworks-layered-tips}
Let $w(i)$ be a non-left-to-right maximum of $w(1)\,w(2)\,\dots\,w(n)$ and let $D(w)_{w(i)}$ denote the $w(i)$-th column of $D(w)$. Let $w(a)$ be the initial term of the descending run of $w(1)\,w(2)\,\dots\,w(n)$ containing $w(i)$.
\begin{itemize}
\item If $w$ is fireworks, then $a \leq i-1$, and $D(w)_{w(i)}\supseteq [a,i-1]$, and $\max(D(w)_{w(i)}) = i-1$. 
\item If $w$ is layered, then $D(w)_{w(i)} = [a,i-1]$.
\end{itemize}
\end{lem}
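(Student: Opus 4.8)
The two claims are about the structure of the column $D(w)_{w(i)}$, so the plan is to unwind the definition of the Rothe diagram column by column and read off which rows contain boxes. Recall $(r, w(i)) \in D(w)$ iff $r < w^{-1}(w(i)) = i$ and $w(i) < w(r)$. So $D(w)_{w(i)} = \{r < i \colon w(r) > w(i)\}$. The whole lemma is therefore a statement about which values $w(r)$ with $r < i$ exceed $w(i)$.

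**Key steps for the fireworks case.** First I would note that since $w(i)$ is a non-left-to-right maximum lying in the descending run starting at position $a$, we have $w(a) > w(a+1) > \dots > w(i)$, and in particular $a \leq i-1$ and every $r \in [a, i-1]$ satisfies $w(r) > w(i)$; hence $[a, i-1] \subseteq D(w)_{w(i)}$. Since $w(a-1) < w(a)$ (as $w(a)$ begins a new descending run — or $a = 1$), the value $w(a-1)$ need not exceed $w(i)$, so we cannot extend this interval downward in general; but the key point for $\max(D(w)_{w(i)}) = i-1$ is simply that $i-1 \in [a, i-1] \subseteq D(w)_{w(i)}$ while no $r \geq i$ lies in $D(w)_{w(i)}$ (the constraint is $r < i$). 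That already gives all three assertions of the fireworks bullet. Actually the cleanest route: $D(w)_{w(i)}$ is nonempty by Lemma~\ref{lem:l2r-max-empty} since $w(i)$ is not a left-to-right maximum, and its maximal element $r$ satisfies $w(r) > w(i)$ with $w(r+1), \dots, w(i-1)$ all $\leq w(i)$; the fireworks ($3\text{-}12$-avoiding) hypothesis forces $r = i-1$, since otherwise $r < i-1 < i$ with $w(i-1) < w(i) < w(r)$ would be a forbidden pattern. Combined with $[a,i-1] \subseteq D(w)_{w(i)}$, this is the fireworks bullet.

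**Key steps for the layered case.** Here I would show the reverse inclusion $D(w)_{w(i)} \subseteq [a, i-1]$, i.e. no $r < a$ has $w(r) > w(i)$. Since $w$ is layered, the positions $1, \dots, a-1$ lie in earlier blocks than the block containing position $i$; within any single block the one-line notation is strictly decreasing, and the blocks partition $[n]$ into consecutive intervals of values as well as positions. So for $r$ in an earlier block, $w(r)$ lies in an earlier value-interval than $w(a), \dots, w(i)$ — in particular $w(r) < w(i)$. Hence $r \notin D(w)_{w(i)}$, giving $D(w)_{w(i)} = [a, i-1]$. Alternatively one can invoke the $231$-avoidance: if some $r < a \leq i-1$ had $w(r) > w(i)$, then since $w(a) > w(r)$ (as $w(a)$ is a left-to-right maximum and $r < a$, so $w(a) > w(r)$... wait, need $w(a)$ to be a left-to-right maximum, which it is as the initial term of a descending run in a fireworks permutation), the triple of positions $r < a < i$ realizes the pattern $231$ in values $w(a) > w(r) > w(i)$ — contradiction. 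Either argument closes the layered case.

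**Main obstacle.** The proof is essentially a careful bookkeeping exercise with patterns, so the only real subtlety is making sure the pattern-avoidance deductions are applied to the correct triples of positions and values, and handling the boundary case $a = 1$ (where there is no $w(a-1)$) uniformly. I expect the step requiring the most care is the forward inclusion $[a,i-1] \subseteq D(w)_{w(i)}$ together with $\max = i-1$ in the fireworks case — one must verify both that the descending run actually reaches position $i$ and that $3\text{-}12$-avoidance rules out any box strictly below row $a-1$ being "skipped," i.e. that the boxes in column $w(i)$ form a contiguous interval ending at $i-1$ rather than merely containing $[a,i-1]$ with possible gaps below. (In fact the argument above shows there are no boxes below row $a$ only in the layered case; in the fireworks case there may be, but they are irrelevant to the three stated conclusions.)
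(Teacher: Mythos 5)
Your proposal is correct and follows essentially the same route as the paper: the descending run from position $a$ to $i$ gives $[a,i-1]\subseteq D(w)_{w(i)}$ and, since the column lives in rows $<i$, also $\max(D(w)_{w(i)})=i-1$; in the layered case the block structure forces $w(j)<w(i)$ for all $j\le a-1$, yielding the reverse inclusion. The alternative pattern-avoidance arguments you sketch are valid but not needed.
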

\begin{proof}
Assume first that $w$ is fireworks. As $w(i)$ is not a left-to-right maximum of $w(1)\,w(2)\,\dots\,w(n)$, it is not an initial term of a decreasing run, i.e., $a \leq i-1$. Since $w(a)\,w(a+1)\,\dots\,w(i)$ form a decreasing run, the inclusion $[a,i-1]\subseteq D(w)_{w(i)}$ holds. The first item then follows from the fact that $D(w)_{w(i)}$ can have boxes only in rows $\leq i-1$.

If $w$ is further assumed to be a layered permutation, then $w(a)$ is the initial term of a block of $w$ and this block contains $w(i)$. Then $w(j) < w(i)$ for all $j \leq a-1$, so $D(w)_{w(i)}$ does not have any squares in rows $\leq a-1$. The previous item now implies $D(w)_{w(i)} = [a,i-1]$.
\end{proof}

\begin{lem}
\label{lem:tip-inclusion}
Let $w$ be a fireworks permutation and write $\pi(w)$ for the layered permutation whose block sizes are the lengths of descending runs of $w$. Then $D(w)_{w(i)} \supseteq D(\pi(w))_{(\pi(w))(i)}$.
\end{lem}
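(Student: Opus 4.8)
\begin{proof}[Proof of Lemma~\ref{lem:tip-inclusion}]
The plan is to reduce both sides to an explicit interval of rows using Lemmas~\ref{lem:l2r-max-empty} and~\ref{lem:fireworks-layered-tips}, after observing that the descending runs of $w$ and the blocks of $\pi(w)$ occupy the same intervals of positions.

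First I would set up notation: let the descending runs of $w(1)\,w(2)\,\dots\,w(n)$ begin at positions $a_1 = 1 < a_2 < \dots < a_m$, so the $k$-th run occupies positions $[a_k, a_{k+1}-1]$ (with $a_{m+1} \colonequals n+1$). By the definition of $\pi(w)$, its block sizes are $a_{k+1}-a_k$, so the $k$-th block of $\pi(w)$ occupies exactly the same interval $[a_k, a_{k+1}-1]$ of positions. In particular, $i$ is the initial term of a descending run of $w$ if and only if $i$ is the initial term of a block of $\pi(w)$; equivalently, by Lemma~\ref{lem:l2r-max-empty}, $w(i)$ is a left-to-right maximum of $w$ if and only if $(\pi(w))(i)$ is a left-to-right maximum of $\pi(w)$.

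Now I would split into two cases. If $w(i)$ is a left-to-right maximum, then by Lemma~\ref{lem:l2r-max-empty} the column $D(w)_{w(i)}$ is empty; by the previous paragraph $(\pi(w))(i)$ is also a left-to-right maximum, so $D(\pi(w))_{(\pi(w))(i)}$ is empty too, and the inclusion holds trivially. Otherwise, let $a = a_k$ be the starting position of the descending run of $w$ (equivalently, the block of $\pi(w)$) containing $i$; since $w(i)$ is not a left-to-right maximum we have $a \leq i-1$. Applying the fireworks case of Lemma~\ref{lem:fireworks-layered-tips} to $w$ gives $D(w)_{w(i)} \supseteq [a, i-1]$, and applying the layered case of Lemma~\ref{lem:fireworks-layered-tips} to $\pi(w)$ gives $D(\pi(w))_{(\pi(w))(i)} = [a, i-1]$. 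Chaining these yields $D(w)_{w(i)} \supseteq D(\pi(w))_{(\pi(w))(i)}$, as desired.
\end{proof}

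The only point requiring care is the bookkeeping that matches descending runs of $w$ with blocks of $\pi(w)$ position-by-position; this is immediate from the definition of $\pi(w)$ but is what makes the two invocations of Lemma~\ref{lem:fireworks-layered-tips} use the same value of $a$. Everything else is a direct appeal to the cited lemmas, so I do not anticipate a substantive obstacle.
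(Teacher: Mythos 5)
Your proof is correct and follows essentially the same route as the paper's: match left-to-right maxima of $w$ and $\pi(w)$ position-by-position, dispose of the empty columns via Lemma~\ref{lem:l2r-max-empty}, and compare the remaining columns via the two cases of Lemma~\ref{lem:fireworks-layered-tips} with a common value of $a$. Your explicit bookkeeping that the descending runs of $w$ and the blocks of $\pi(w)$ occupy the same position intervals is left implicit in the paper but is exactly the right point to make.
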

\begin{proof}
Since $w$ and $\pi(w)$ are both fireworks with descending runs of the same length, $w(i)$ is a left-to-right maximum of $w(1)\,w(2)\,\dots\,w(n)$ if and only if $(\pi(w))(i)$ is a left-to-right maximum of $(\pi(w))(1)\,(\pi(w))(2)\,\dots\,(\pi(w))(n)$. For such $i$, Lemma~\ref{lem:l2r-max-empty} gives $D(w)_{w(i)} = D(\pi(w))_{(\pi(w))(i)} = \emptyset$. For all other $i$, Lemma~\ref{lem:fireworks-layered-tips} gives 
\[
D(\pi(w))_{(\pi(w))(i)} = [a,i-1]\subseteq D(w)_{w(i)}.
\]
\end{proof}
\begin{rem}
The assignment $w\mapsto \pi(w)$ has appeared before in \cite[Defn 6.1]{psw24}. They show that, for any permutation $w$, there exists a unique inverse fireworks permutation $\pi(w)$ such that $\mathfrak G_w^{\mathrm{top}} = c_w \cdot \mathfrak G_{\pi(w)}^{\mathrm{top}}$ for some $c_w \in \ZZ_{>0}$. When $w$ is fireworks, the corresponding inverse fireworks permutation is the layered permutation of Lemma~\ref{lem:tip-inclusion}.
\end{rem}
\begin{lem}
\label{lem:psp-inclusion}
Let $A\subseteq B\subseteq[n]$ be two sets satisfying $\max(A) = \max(B)$. Then $P_{\mathrm{sp}}(\SM(B)) \subseteq P_{\mathrm{sp}}(\SM(A))$.
\end{lem}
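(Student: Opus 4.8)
The plan is to argue at the level of spanning sets and their indicator vectors, then pass to convex hulls. Write $A = \{a_1 < \dots < a_r\}$ and $B = \{b_1 < \dots < b_s\}$; since $A\subseteq B$ and $\max(A) = \max(B)$, we have $r \leq s$, $a_r = b_s$, and $[b_s] = [a_r]$ is the common ambient interval for both spanning-set collections. It suffices to show $\mathcal B_{\mathrm{sp}}(B) \subseteq \mathcal B_{\mathrm{sp}}(A)$, i.e., that every spanning set of $\SM(B)$ is also a spanning set of $\SM(A)$, since this gives a containment of the generating vertex sets and hence of convex hulls. So fix $B' = \{c_1 < \dots < c_t\} \subseteq [a_r]$ with $t \geq s$ and $c_i \leq b_i$ for all $i \in [s]$; the goal is to show $t \geq r$ and $c_i \leq a_i$ for all $i \in [r]$. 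The first is immediate since $t \geq s \geq r$.

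For the inequality $c_i \leq a_i$ with $i \in [r]$, the key observation is that $A \subseteq B$ forces $a_i \geq b_i$ for every $i$ (the $i$-th smallest element of a subset is weakly larger than the $i$-th smallest element of the superset). Combining this with the hypothesis $c_i \leq b_i$ — which holds for $i \leq s$, and in particular for $i \leq r$ — we do \emph{not} immediately get $c_i \leq a_i$; the inequalities point the wrong way. The correct route is instead to use $\max(A)=\max(B)$ more carefully. I would reindex from the top: since $a_r = b_s$, and more generally since $A \subseteq B$ with the same maximum, one shows by downward induction that the ``gap'' between $A$ and $B$ can be absorbed. Concretely, for each $i \in [r]$, the element $a_i$ is the $i$-th element of $A$, and among $b_1, \dots, b_s$ exactly those below or equal to $a_i$ that lie in $A$ number $i$; since all of $a_1, \dots, a_i$ appear among the $b_j$'s, there is some index $\sigma(i) \geq i$ with $b_{\sigma(i)} = a_i$, and $\sigma$ is strictly increasing. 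Then $c_i \leq c_{\sigma(i)} \leq b_{\sigma(i)} = a_i$, where the first inequality uses $i \leq \sigma(i)$ and $c_1 < \dots < c_t$, and the middle uses $\sigma(i) \leq s$ (which holds because $a_i \leq a_r = b_s$ so $\sigma(i) \leq s$) together with the spanning-set condition for $B'$ relative to $B$. This establishes $B' \in \mathcal B_{\mathrm{sp}}(A)$.

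The only subtle point — and the step I expect to require the most care — is verifying $\sigma(i) \leq s$, i.e., that the matching index stays within the range $[s]$ where the inequality $c_{\sigma(i)} \leq b_{\sigma(i)}$ is actually hypothesized; this is exactly where $\max(A) = \max(B)$ enters, since without it one could have $a_r$ sitting at position $> s$ in an enlarged $B$ and the argument would break. Once the set-level containment $\mathcal B_{\mathrm{sp}}(B) \subseteq \mathcal B_{\mathrm{sp}}(A)$ is in hand, the polytope containment $P_{\mathrm{sp}}(\SM(B)) = \conv\{\zeta_{B'} : B' \in \mathcal B_{\mathrm{sp}}(B)\} \subseteq \conv\{\zeta_{B'} : B' \in \mathcal B_{\mathrm{sp}}(A)\} = P_{\mathrm{sp}}(\SM(A))$ is formal.
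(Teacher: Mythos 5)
Your proof is correct, and it takes a slightly more direct route than the paper's. The paper first rewrites the lattice points of each $P_{\mathrm{sp}}(\SM(\cdot))$ as a union of intervals $[\alpha,\omega_k]$ over bases $\alpha$, and then reduces to finding, below each basis of $\SM(B)$, a basis of $\SM(A)$; you instead prove the containment $\mathcal B_{\mathrm{sp}}(B)\subseteq\mathcal B_{\mathrm{sp}}(A)$ of spanning-set collections outright and pass to convex hulls, which avoids invoking the interval decomposition. Both arguments rest on the same elementary fact: if $A\subseteq B$ then the $i$-th smallest element of $A$ is at least the $i$-th smallest element of $B$, i.e., $b_i\leq a_i$ for $i\leq|A|$.

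Two remarks on your middle paragraph, neither of which invalidates the proof. First, your claim that the inequalities ``point the wrong way'' is mistaken: from $c_i\leq b_i$ and $b_i\leq a_i$ you get $c_i\leq b_i\leq a_i$ immediately, so the entire $\sigma$-reindexing detour is unnecessary (though it is also valid, since $\sigma(i)\geq i$ and $\sigma(i)\leq s$ do hold). Second, you mislocate where $\max(A)=\max(B)$ is used: the bound $\sigma(i)\leq s$ holds automatically because $a_i\in A\subseteq B$ means $a_i$ occupies one of the $s$ positions of $B$'s element list. The hypothesis is genuinely needed only where you first invoked it, namely to ensure the ambient intervals agree, $[a_r]=[b_s]$ — without it a spanning set of $\SM(B)$ could contain elements exceeding $\max(A)$ and so fail to be a subset of $[\max(A)]$ at all. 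With the argument trimmed to the direct chain $c_i\leq b_i\leq a_i$ plus the ambient-interval observation, the proof is clean and complete.
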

\begin{proof}
Write $k\colonequals \max(A) = \max(B)$ and $\omega_k\colonequals (1, \dots, 1, 0,\dots,0)$ for the $k$-th fundamental weight. From the equalities
\begin{align*}
P_{\mathrm{sp}}(\SM(A))\cap\ZZ^n &= \bigcup_{\alpha\in P(\SM(A))}[\alpha,\omega_k],\\
P_{\mathrm{sp}}(\SM(B))\cap\ZZ^n &= \bigcup_{\beta\in P(\SM(B))}[\beta,\omega_k],
\end{align*}
it suffices to show that for any $\beta\in P(\SM(B))\cap\ZZ^n$ there exists $\alpha\in P(\SM(A))\cap\ZZ^n$ such that $\beta\geq\alpha$. To this end, let $a_1 < \dots < a_{|A|}$ and $b_1 < \dots < b_{|B|}$ denote the elements of $A\subseteq B$, and note that any $\beta \in P(\SM(B))\cap \ZZ^n$ is an indicator function $\beta = \mathbf 1_S$ of a set $S\subseteq[n]$ consisting of elements $s_1 < \dots < s_{|B|}$ satisfying $s_i \leq b_i$; the inclusion $A\subseteq B$ implies that $b_i \leq a_i$ for all $i\leq |A|$, so that the set $T = \{s_1, \dots, s_{|A|}\}$ is a basis of the matroid $\SM(A)$, and $\alpha \colonequals \mathbf 1_T\in P(\SM(A))$ is the desired vector. 
\end{proof}

\newtheorem*{cor:maximal-newton}{Corollary~\ref{cor:maximal-newton}}

\begin{cor:maximal-newton}
Let $w$ be a fireworks permutation and write $\pi(w)$ for the layered permutation whose block sizes are the lengths of descending runs of $w$. Then $\supp(\mathfrak G_{\pi(w)})\supseteq\supp(\mathfrak G_w)$.
\end{cor:maximal-newton}
\begin{proof}[Proof of Corollary~\ref{cor:maximal-newton}]
Write $D_i\colonequals D(w)_{w(i)}$ and $D_i'\colonequals D(\pi(w))_{(\pi(w))(i)}$. Lemma~\ref{lem:tip-inclusion} implies that $D_i \supseteq D_i'$ and $\max(D_i) = \max(D_i')$, so Lemma~\ref{lem:psp-inclusion} gives
\[
\supp(\mathfrak G_w) = \left(\sum_{i=1}^n P_{\mathrm{sp}}(\SM(D_i))\right)\cap\ZZ^n \subseteq \left(\sum_{i=1}^n P_{\mathrm{sp}}(\SM(D_i'))\right)\cap\ZZ^n = \supp(\mathfrak G_{\pi(w)}).
\]
\end{proof}

\bibliographystyle{alpha}
\bibliography{citation}{}
\end{document}